\documentclass[11pt, epsfig]{article}
\usepackage{epsfig, amsmath, amssymb, amsthm, times}
\usepackage[titletoc]{appendix}
\usepackage{graphicx}

\parindent=1.5em
\parskip10pt
\textwidth=6.5in
\topmargin= 0.0in
\oddsidemargin=0in
\textheight=8.5in

\pagestyle{plain}

\newcommand{\R}{{\mathbb R}}

\newcommand{\N}{{\mathbb N}}
\newcommand{\Z}{{\mathbb Z}}

\def\1{{\mathbf 1}}

\DeclareMathOperator{\esssup}{ess\,sup}


\usepackage[mathcal]{eucal}

\newtheorem {thm}{Theorem}[section]
\newtheorem {lem}[thm]{Lemma}

\newtheorem {cor}[thm]{Corollary}

\theoremstyle{defintion}
\newtheorem {df}[thm]{Definition}

\theoremstyle{remark}
\newtheorem{rem}[thm]{Remark}

\theoremstyle{example}

\theoremstyle{assumption}

\def\E{\operatorname{\mathbb E}}
\def\P{\operatorname{\mathbb P}}
\def\T{{\mathbb T}}

\def\lbl{\label}
\def\be{\begin{equation}}
\def\ee{\end{equation}}
\def\p{\partial}

\def\bk{\bar k}\def\bl{\bar l}
\def\bj{\bar j}
\def\bi{\bar i}
\def\b{\bar}
\def\nd{[n]^d}

\title{Sparse Monte Carlo method for nonlocal diffusion problems}
\author{  
Dmitry Kaliuzhnyi-Verbovetskyi and 
Georgi S. Medvedev
\thanks{
Department of Mathematics, Drexel University, 3141 Chestnut Street,
Philadelphia, PA 19104;
{\tt medvedev@drexel.edu}
}
}

\begin{document}
\maketitle

\begin{abstract}
A class of evolution equations with nonlocal diffusion is considered in this work. These are
integro-differential equations arising
as models of propagation phenomena in continuum media with nonlocal interactions
including neural tissue, porous media flow, peridynamics, models with fractional diffusion,
as well as continuum limits of interacting dynamical systems. 
The principal challenge of numerical integration of nonlocal systems stems from the lack of spatial 
regularity of the data and solutions intrinsic to nonlocal models. To overcome this problem 
we propose a semidiscrete numerical scheme based on the combination of sparse Monte Carlo and 
discontinuous Galerkin methods. Our method requires minimal assumptions on the regularity 
of the data. In particular, the kernel of the nonlocal diffusivity is assumed to be a square integrable
function and may be singular or discontinuous. An important feature of our method 
is sparsity. Sparse sampling of points in the Monte Carlo
approximation of the nonlocal term allows to use fewer discretization points without compromising 
the accuracy. For kernels with singularities, more points are selected automatically in the regions
near the singularities.

We prove convergence of the numerical method and estimate the rate of convergence.
There are two principal ingredients in the error of the numerical method related to the use
of Monte Calro and Galerkin approximations respectively. We analyze both errors. 
Two representative examples of discontinuous kernels are presented.
The first example features a kernel with a singularity, while
the kernel in the second example experiences jump discontinuity. We show how the information 
about the singularity in the former case and the geometry of the discontinuity set in the latter
translate into the rate of convergence of the numerical procedure. In addition, we illustrate 
the rate of convergence estimate with a numerical example of an initial value problem, for which 
an explicit analytic solution is available. Numerical results are consistent with analytical estimates.  
\end{abstract}

\section{Introduction}
\setcounter{equation}{0}

We propose a numerical method for the initial value problem (IVP) for a nonlinear heat
equation with nonlocal diffusion
\begin{eqnarray}\lbl{nloc}
\p_t u(t,x) &=& f(u, x, t) +\int W(x,y) D\left(u(t,y)- u(t,x)\right) dy,
\quad x\in Q\subset\R^d,\\
\lbl{nloc-ic}
 u(0,x)&=& g(x).
\end{eqnarray}
For analytical convenience, we take $Q=[0,1]^d$ as a spatial domain. Throughout this paper,
when the domain of integration is not specified, it is assumed to be $Q$.
Further, $g\in L^2(Q)$, $W\in L^2(Q^2)$, $f$ is
a bounded measurable function on $\R\times Q\times \R^+$, which is Lipschitz continuous in 
$u,$ continuous in $t,$ and integrable in $x,$ and $D$ is a Lipschitz continuous function on $\R:$ 
\be\lbl{Lip}
\left| D(u_1)-D(u_2)\right|\le L_D |u_1-u_2|\quad\mbox{and}\quad
\left| f(u_1,x,t)-f(u_2,x,t)\right|\le L_f |u_1-u_2|
\ee
for all $(x,t)\in Q\times \R.$
Throughout this paper we assume
\be\lbl{bound-D}
\sup_{u\in \R} |D(u)|\le 1.
\ee
This  assumption may be dropped if an apriori estimate on $\|u\|_{C(0,T; L^\infty(Q))}$ for $T>0$ is 
available. 
Furthermore, the analysis below applies to models with the interaction function of a more 
general form $D(u_1,u_2)$ provided
\be\lbl{D-Lip-2d}
\left| D(u_1,v_1)-D(u_2,v_2)\right| \le L_D \left( |u_1-u_2|+|v_1-v_2|\right)\quad 
\forall u_1, u_2, v_1, v_2\in \R. 
\ee
However, we keep $D(u_1,u_2):=D(u_2-u_1)$ to emphasize the connection to 
diffusion problems.

Equation \eqref{nloc} is a nonlocal diffusion problem. It arises as a continuum limit of interacting
particle systems \cite{Med18, Gol16}. Equations of this form are used for modeling population 
dynamics \cite{ShenXie15,CarFife05, SchSil16, AMRT10, BouCal10}, neural tissue \cite{CGP14},  
porous media flows \cite{PabQuiRod16, TEJ17}, and various other biological and physicochemical
processes involving anomalous diffusion \cite{Vazquez17, BBPN18}.
The key distinction of the evolution equations with nonlocal diffusion from their classical counterparts is 
the lack of smoothening property. A priori the solution of \eqref{nloc}, \eqref{nloc-ic} is a square integrable
function in $x$ for all $t>0$ \cite{Med18} and it may not possess much more regularity beyond that, 
unless the initial data and kernel $W$ are smooth \cite[Theorem~3.3]{Med14a}. The lack of smoothness
is   a  serious challenge
for constructing numerical schemes for \eqref{nloc}, \eqref{nloc-ic}
and for analyzing their convergence.
All deterministic quadrature formulas require at least piecewise differentiability for a guaranteed convergence
rate. The problem is even more challenging in high dimensional spatial domains. The main idea underlying
our approach is to use the Monte Carlo approximation of the nonlocal term in \eqref{nloc}.
 We take advantage of the essential feature of 
the Monte Carlo method: 
the independence
of the convergence rate on the regularity of the integrand. The second key idea is sparsity, 
whose use is twofold: First, sparse sampling of points in the Monte Carlo method is used to minimize 
computation without compromising the accuracy. For $W$ with jump discontinuity
across Lipschitz hypersurfaces, the use of sparsity is computationally beneficial starting from $d=2$.
If the discontinuity set has nontrivial fractal dimension, the sparse Monte Carlo method performs better
than its dense counterpart already for $d=1$ (cf.~Lemma~\ref{lem.graphon}).
Furthermore, sparsity is the key for extending the Monte
Carlo method for models  with singular kernels  (see \S~\ref{sec.singular}). Not only does it allow to apply
the Monte Carlo method for unbounded functions, it also makes it automatically adaptive: more sample
points are selected near the singularities.
The combination of these ideas together with the
discontinuous Galerkin method yields a numerical
scheme for the IVP \eqref{nloc}, \eqref{nloc-ic} that performs well under minimal assumptions on the regularity
of $W$ and initial data.

This paper is based on our previous work on convergence of interacting particle systems on convergent graph 
sequences \cite{Med14a, Med14b, KVMed17, Med18}. Continuum limit is a powerful tool for studying various
aspects of network dynamics including existence, stability, and bifurcations of spatiotemporal patterns
\cite{WilStr06, Med14c, MedTan15b, MedTan18}. Very often the derivation of the continuum limit is based
on heuristic considerations and its rigorous mathematical justification is a nontrivial problem. Recently,
motivated by the theory of graph limits \cite{LovSze06, LovGraphLim12, BCCZ19}, we proved convergence
to the continuum limit for a broad class of dynamical systems on graphs \cite{Med14a}.
Importantly, our proof applies to models on random graphs \cite{Med14b} including sparse random
graphs \cite{KVMed17, Med18}. These results prepared the ground for the numerical method proposed
in this paper. There is an intimate relation between the problem of the continuum limit for interacting 
particle systems and numerical integration of nonlocal diffusion models. Given a continuum model \eqref{nloc},
one can construct the corresponding particle system, approximating \eqref{nloc}. This idea had been 
already mentioned in \cite{Med14a}, but has never been detailed. Further, recent results for the continuum
limit of coupled systems on sparse graphs indicate a strong potential of sparse discretization for
numerical integration of nonlocal problems. It is the goal of this paper to present these ideas in 
detail.

In the next section, we present a discretization of \eqref{nloc}, which can be viewed as an interacting 
dynamical
system on a \textit{sparse random} graph. The structure of the graph is determined by the kernel $W$, 
which defines 
the asymptotic connectivity of the graph sequence parametrized by the size of the graph. In the theory of graph
limits, such functions are called graphons \cite{LovGraphLim12}, the term we adopt for the reminder of this paper. In Section~\ref{sec.main},
we prove convergence of the semidiscrete (discrete in space and continuous in time) approximation of 
\eqref{nloc} and turn to estimating the rate of convergence in Sections~\ref{sec.examples} and \ref{sec.rate}.
There are two main factors contributing to the error of approximation. The first is due to approximating the 
nonlocal term in \eqref{nloc}
by a random sum (Monte Carlo method), while the second is due to approximating the kernel and the
 initial data by piecewise constant functions (discontinuous Galerkin method). The rate of convergence of the 
sparse Monte Carlo approximation follows from our previous results
\cite[Theorem~4.1]{Med18}. Convergence of piecewise constant approximation in the $L^2$--norm follows 
from classical 
theorems of analysis (cf.~the Lebesgue-Besicovitch Theorem \cite{Evans-fine} or 
$L^2$--Martingale Convergence Theorem \cite{Williams-Prob-Mart}). However, neither of these theorems
elucidates the rate of convergence.  In fact, the example in \S~\ref{sec.graphon} shows that without additional
hypotheses  the algebraic convergence may be arbitrarily slow. 
To this end, we study what determines the rate of convergence of piecewise constant approximations
for a square integrable function. For H{\" o}lder continuous
functions the answer is simple (cf.~Lemma~\ref{lem.Holder}). For discontinuous functions, on the other hand, 
the answer
naturally depends on the type of discontinuity. In Section~\ref{sec.examples},
we consider two examples elucidating this issue. The first example is based on a singular
(unbounded) graphon. It shows how the information about the singularity translates into the 
rate of convergence estimate. Here, we also see how to use sparsity to optimize computation.
The second example adapted from \cite{Med14a}, on the other hand, presents a bounded graphon with jump discontinuity 
(\S~\ref{sec.graphon}). In this case, the convergence rate depends on the geometry of the set of the 
discontinuity (cf.~Lemma~\ref{lem.graphon}). In the light of these examples, in Section~\ref{sec.rate},
we perform convergence analysis under general assumptions on $W$. In Section~\ref{sec.numerics},
we illustrate rate of convergence estimates with a numerical example. Here, we choose a nonlinear
problem, which has an explicit solution. This allows us to verify the rate of convergence of the $L^2$-error
as the discretization step tends to zero. Special attention is paid to the dependence of the convergence rate
on sparsity. Finally, in Section~\ref{sec.average} 
we present a proof of a technical Lemma~\ref{lem.ave}, which extends the corresponding result in 
\cite{Med18} to models in multidimensional domains and affords a wider range of sparsification.

Numerical methods for nonlocal diffusion problems have been subject of intense research recently due to their
increased use in modeling \cite{Du2019a, Du2019b, Du2018, BBPN18, Noch19, Noch16}. Compared to the
existing literature, the contribution of the present work is that our method applies to problems with nonlinear
diffusivity as well as to problems with more general form of the interaction function (cf.~\eqref{D-Lip-2d}). 
The main focus of this paper is how to deal with models with low regularity of the data.  We believe that the 
combination of the Monte Carlo and discontinuous Galerkin methods provides an effective tool for numerical
integration of nonlocal problems under minimal regularity assumptions.
 
\section{The model and its discretization} \lbl{sec.model}
\setcounter{equation}{0}
In this section, we formulate the technical assumptions on the kernel $W$ and 
describe the numerical scheme for solving the IVP \eqref{nloc}-\eqref{nloc-ic}.

We assume that $W\in L^2(Q^2)$ is subject to the following 
assumptions:
\be\lbl{W-1}
\tag{W-1}
\max\left\{ \esssup_{x\in Q} \int |W(x,y)| dy, \;\esssup_{y\in Q} \int |W(x,y)| dx \right\}\le W_1,
\ee

\begin{thm}\lbl{thm.exists}
Let $W \in L^2(Q^2)$ satisfy \eqref{W-1}. Then for any $g\in L^2(Q)$ and $T>0$ 
there is a unique solution of the IVP \eqref{nloc}, \eqref{nloc-ic} $u\in C^1(0,T; L^2(Q))$.
\end{thm}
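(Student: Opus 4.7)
The plan is to recast the IVP as an ODE in the Banach space $L^2(Q)$ and apply the Picard--Lindel\"of theorem. Define the vector field $F:L^2(Q)\times[0,T]\to L^2(Q)$ by
\[
F(u,t)(x) := f(u(x),x,t) + \int W(x,y)\, D\bigl(u(y)-u(x)\bigr)\,dy,
\]
so that \eqref{nloc}--\eqref{nloc-ic} becomes $u'(t)=F(u(t),t)$, $u(0)=g$. I would first verify that $F$ maps into $L^2(Q)$: the pointwise bound $|D|\le 1$ from \eqref{bound-D} together with \eqref{W-1} gives $\bigl|\int W(x,y)D(\cdots)\,dy\bigr|\le W_1$, and the assumption that $f$ is bounded and measurable handles the reaction term. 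So $F(u,t)\in L^\infty(Q)\subset L^2(Q)$ uniformly in $u$ and $t$.

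The main estimate is a global Lipschitz bound in $u$, uniform in $t$. For the reaction term this is immediate from \eqref{Lip}. For the nonlocal term, I would write
\[
\bigl|F(u_1,t)(x)-F(u_2,t)(x)\bigr| \le L_f |u_1(x)-u_2(x)| + L_D\!\int |W(x,y)|\bigl(|u_1(y)-u_2(y)|+|u_1(x)-u_2(x)|\bigr) dy
\]
and bound each piece in $L^2(Q)$. The second summand is controlled by $L_D W_1\|u_1-u_2\|_{L^2}$ using \eqref{W-1} in $y$. For the first summand, Cauchy--Schwarz applied to the measure $|W(x,y)|dy$ yields
\[
\left(\int |W(x,y)||u_1(y)-u_2(y)|\,dy\right)^{\!2} \le W_1 \int |W(x,y)|\,|u_1(y)-u_2(y)|^2\,dy,
\]
and then Tonelli's theorem with the $x$-marginal bound from \eqref{W-1} gives an $L^2(Q)$-norm controlled by $W_1\|u_1-u_2\|_{L^2}$. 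Combining the estimates, $\|F(u_1,t)-F(u_2,t)\|_{L^2}\le L\|u_1-u_2\|_{L^2}$ with $L=L_f+2L_DW_1$.

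Continuity of $F(u,\cdot)$ in $t$ follows from the assumed continuity of $f$ in $t$ together with dominated convergence (since the $u,t$-dependence of the nonlocal term is only through $u$). With global Lipschitz continuity in $u$ uniform in $t$ and continuity in $t$, the standard Banach fixed point argument applied to the integral equation $u(t)=g+\int_0^t F(u(s),s)\,ds$ on $C([0,T];L^2(Q))$—using the equivalent norm $\|u\|_\lambda=\sup_{t\in[0,T]}e^{-\lambda t}\|u(t)\|_{L^2}$ with $\lambda>L$—produces a unique mild solution on $[0,T]$ for arbitrary $T>0$. Because $t\mapsto F(u(t),t)$ is then continuous into $L^2(Q)$, the fundamental theorem of calculus upgrades this mild solution to $u\in C^1(0,T;L^2(Q))$.

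The main obstacle, and the only nontrivial step, is verifying the $L^2$-Lipschitz bound for the nonlocal operator without any pointwise control on $W$; the assumption \eqref{W-1} is exactly what is needed to carry out the Cauchy--Schwarz/Tonelli argument above. Everything else is boilerplate Picard iteration in a Banach space.
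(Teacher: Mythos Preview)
Your argument is correct: recasting the IVP as an abstract ODE $u'=F(u,t)$ in $L^2(Q)$, establishing a global Lipschitz bound on $F$ via \eqref{W-1} and the Cauchy--Schwarz/Tonelli estimate you outline, and then invoking the Banach fixed point theorem is exactly the standard route, and the details you sketch go through without difficulty. The paper itself does not give an independent proof but simply cites \cite[Theorem~3.1]{KVMed17} by the same authors, where the identical contraction-mapping argument is carried out; your write-up is essentially a self-contained version of that referenced proof.
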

\begin{proof}
The proof is as in \cite[Theorem~3.1]{KVMed17} with minor adjustments.
\end{proof}

Next, we note that the kernel in the nonlocal term may be assumed nonnegative.
Indeed, by writing $W=W^+-W^-$ as the difference of its positive and negative parts, one can rewrite
\eqref{nloc} as 
\be\lbl{rewrite-nloc}
\p_t u(t,x) = f(u, x, t) +\int W^+(x,y) D\left(u(t,y)- u(t,x)\right) dy -\int W^-(x,y) D\left(u(t,y)- u(t,x)\right) dy,
\ee
where the nonlocal terms splits into the difference of two terms with nonnegative kernels. Thus, without loss
of generality, in the remainder of this paper we will assume
\be\lbl{nonnegativeW}
W\ge 0.
\ee

We approximate the IVP \eqref{nloc}, \eqref{nloc-ic} by the following system of ordinary 
differential equations
\begin{eqnarray}
\lbl{KM}
\dot u_{n,\bar i} &=& f_{n,\bar i}(u_{n,\bar i},t) + (\alpha_nn^d)^{-1} \sum_{\bar j\in \nd} a_{n,\bar i\bar j} 
D(u_{n,\bar j}-u_{n,\bar i}),
\quad \bar i \in \nd,\\
\lbl{KM-ic}
u_{n,\bar i}(0)&=&g_{n,\bar i}, 
\end{eqnarray}
where
\be\lbl{def-fi}
Q_{n,\bar i} =\left[ {i_1-1\over n}, {i_1\over n}\right)
\times \left[ {i_2-1\over n}, {i_2\over n}\right)
\times \dots \times \left[ {i_d-1\over n}, {i_d\over n}\right), 
\ee
\be\lbl{where}
g_{n, \bar i} = n^d\int_{Q_{n,\bar i}} g(x) dx,\quad 
 f_{n,\bar i}(u,t)= n^d \int_{Q_{n,\bar i}} f(u,x,t) dx, \quad \bar i:=(i_1, i_2,\dots, i_d)\in [n]^d.
\ee
The semidiscrete system \eqref{KM} can be viewed as a system of interacting
particles on a random graph $\Gamma_n$ with the node set $[n]^d$ and adjacency matrix 
$(a_{n,\bar i\bar j})$.  The positive sequence 
\be\lbl{def-alpha}
\alpha_n=n^{-d\gamma},\quad 0\le \gamma< 1,
\ee
is used to control the sparsity of $\Gamma_n$. The adjacency matrix $(a_{n,\bar i\bar j})$ is defined as
follows. 
The case $W\in L^\infty(Q^2)$ is slightly different and so we treat it separately.
Thus, there are two cases to consider.
\begin{description}
\item[(I)] Suppose $W\in L^\infty(Q^2)$. Without loss of generality, we 
further assume that $0\le W\le 1$. Then let
\be\lbl{bounded-averageW}
W_{n,\bar i\bar j}=
n^{2d} \int_{Q_{n,\bar i}\times Q_{n,\bar j}}  W(x,y)dxdy,
\ee
and
\be\lbl{boundedP}
\P(a_{n,\bar i\bar j}=1)=\alpha_n W_{n,\bar i\bar j},\quad \bi,\bj \in \nd.
\ee
If $\alpha_n\equiv 1 (\gamma=0)$ (cf.~\eqref{def-alpha}), $\Gamma_n$ is a dense
W-random graph \cite{LovSze06}, otherwise $\Gamma_n$ is sparse with the 
mean degree $O(n^{d(1-\gamma)}), \; 0\le \gamma<1.$

\item[(II)] Alternatively, if $W$ is in $L^2(Q^2)$ but not in $L^\infty(Q^2)$ then let
\be\lbl{cutoff}
\tilde W_n(x,y):= \alpha_n^{-1} \wedge W(x,y)\quad\mbox{and}\quad
W_{n,\bar i\bar j}=
n^{2d} \int_{Q_{n,\bar i}\times Q_{n,\bar j}}  \tilde W(x,y)_n dxdy,
\ee
where $\alpha_n$ defined in \eqref{def-alpha} with $\gamma\in (0,1)$.
Then
\be\lbl{Pedge}
\P(a_{n,\bar i\bar j}=1) = \alpha_n W_{n,\bar i,\bar j},\quad {\bar i}, {\bar j}\in \nd.
\ee
\end{description}

\section{Convergence of the numerical method} \lbl{sec.main}
\setcounter{equation}{0}

In this section, we study convergence of the discrete scheme \eqref{KM}, \eqref{KM-ic}.
We first deal with the more general case of unbounded graphon $W$ and then specialize the
result for $W\in L^\infty(Q^2)$.

The following additional mild assumption on $W$ is used to get a wider range of sparsity.
Let nonnegative $W\in L^4(Q^2)$ satisfy
\be\lbl{W-1s}
\tag{W-1s}
\max\left\{  \esssup_{x\in Q} \int W^k (x,y) dy,\; \esssup_{y\in Q} \int W^k (x,y) dx
\right\} \le \bar W_k,\quad k\in[4].
\ee

\begin{thm}\lbl{thm.main}
 Suppose nonnegative $W\in L^4(Q^2)$ is subject to \eqref{W-1s}, $D,$ $f,$ and $g$ are as in
\eqref{nloc}, \eqref{nloc-ic}. Further, $\alpha_n=n^{-d\gamma}$ for some $\gamma\in (0,1)$.
Then for arbitrary
$0<\delta<1-\gamma,$ we have
\be\lbl{main-est}
\begin{split}
\sup_{t\in [0,T]}  \|u(t,\cdot)-u_n(t, \cdot)\|_{L^2(Q)}  &\le C
\left( \|g-g_n\|_{L^2(Q)}+ \sup_{u\in\R, \,t\in [0,T]} \|f(u,\cdot, t)-f_n(u,\cdot, t) \|_{L^2(Q)}\right.\\
&\left.+ \| \tilde W_n-W\|_{L^2(Q^2)} +
\| \tilde W_n- P_n \tilde W_n\|_{L^2(Q^2)} +
n^{-d(1-\gamma-\delta)/2}\right),
\end{split}
\ee
where $C$ is a positive constant independent of $n$, and  
$P_n\tilde W_n=:W_n$ stands for the $L^2$-projection of $\tilde W_n$
onto the finite--dimensional subspace $X_n=\operatorname{span} 
\{ \1_{Q_{n,\bi}\times Q_{n,\bj}},\, (\bi,\bj)\in [n]^{2d}\}$:
$$
W_n(x,y)=\sum_{(\bar i, \bar j)\in {\nd}^2} W_{n,\bar i\bar j} 
\1_{Q_{n,\bar i}\times Q_{n,\bar j}} (x,y).
$$ 
and 
$$
f_n(u,x,t)=\sum_{\bar i\in \nd}  f_{n,\bar i}(u, t)\1_{Q_{n,\bar i}}(x).
$$
Estimate \eqref{main-est} holds almost surely (a.s.) with respect to the random graph model.
\end{thm}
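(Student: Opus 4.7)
The plan is to lift the finite-dimensional system \eqref{KM}--\eqref{KM-ic} to an $L^2(Q)$-valued integral equation and then compare it with \eqref{nloc}--\eqref{nloc-ic} by a Gronwall-type argument. As a first step, I would extend the discrete solution to the piecewise-constant function
\[
u_n(t,x) := \sum_{\bar i\in [n]^d} u_{n,\bar i}(t)\,\1_{Q_{n,\bar i}}(x),
\]
and verify that $u_n$ satisfies
\[
\p_t u_n(t,x) = f_n(u_n,x,t) + \int K_n(x,y)\, D(u_n(t,y)-u_n(t,x))\,dy,
\]
with the piecewise-constant random kernel
\[
K_n(x,y) := \alpha_n^{-1}\sum_{\bar i,\bar j\in [n]^d} a_{n,\bar i\bar j}\,\1_{Q_{n,\bar i}\times Q_{n,\bar j}}(x,y).
\]
This places the exact and the discrete problem on equal footing and reduces the task to comparing two nonlocal evolution equations on $L^2(Q)$.

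Subtracting the two equations, I would decompose the nonlocal error by inserting the intermediate kernels $\tilde W_n$ and $W_n=P_n\tilde W_n$. The nonlocal-term difference
\[
\int W(x,y)\, D(u(y)-u(x))\,dy - \int K_n(x,y)\, D(u_n(y)-u_n(x))\,dy
\]
splits into three ``kernel-difference'' integrals against the common argument $D(u(y)-u(x))$ with kernels $W-\tilde W_n$, $\tilde W_n-W_n$ and $W_n-K_n$, together with the ``argument-difference'' remainder $\int K_n\bigl[D(u(y)-u(x))-D(u_n(y)-u_n(x))\bigr]\,dy$. The reaction term is split analogously as $f(u,x,t)-f_n(u_n,x,t)=[f(u,x,t)-f(u_n,x,t)]+[f(u_n,x,t)-f_n(u_n,x,t)]$, feeding the first difference into the Gronwall variable and the second into the right-hand side of \eqref{main-est}.

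For the estimates, I would use $|D|\le 1$ together with the Cauchy--Schwarz inequality to bound the first two kernel pieces by $\|W-\tilde W_n\|_{L^2(Q^2)}$ and $\|\tilde W_n-P_n\tilde W_n\|_{L^2(Q^2)}$, and invoke Lemma~\ref{lem.ave} for the third (random) piece to obtain the almost-sure bound $C n^{-d(1-\gamma-\delta)/2}$, uniformly on $[0,T]$. The Lipschitz remainder is controlled by $L_D\bigl(\|K_n\|_{L^2\to L^2}+\esssup_{x\in Q}\int K_n(x,y)\,dy\bigr)\|u-u_n\|_{L^2}$, which with an a.s.~uniform-in-$n$ bound on the row sums and the operator norm of $K_n$ contributes only to the Gronwall coefficient. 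Combining everything and pairing with $u-u_n$ in $L^2(Q)$ yields an inequality of the form
\[
\tfrac12\frac{d}{dt}\|u(t,\cdot)-u_n(t,\cdot)\|_{L^2}^2 \le C\|u-u_n\|_{L^2}^2 + \|u-u_n\|_{L^2}\, R_n(t),
\]
where $R_n(t)$ collects the four non-initial error terms on the right-hand side of \eqref{main-est}. Integrating on $[0,T]$ via Gronwall and accounting for $\|g-g_n\|_{L^2}$ at $t=0$ then yields the theorem.

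The principal obstacle is the third kernel piece. A naive $L^2(Q^2)$-bound on $W_n-K_n$ does \emph{not} vanish --- its expected square is of order $\alpha_n^{-1}=n^{d\gamma}$ --- so the sampling error has to be estimated through its action on the test integrand $D(u(y)-u(x))$ rather than in the kernel norm; this is precisely the content of Lemma~\ref{lem.ave}, whose proof exploits the fourth-moment assumption \eqref{W-1s} to admit the full sparsity range $\gamma\in(0,1)$ with the correction $\delta$. The closely related a.s.~control on the weighted row sums and the $L^2$-operator norm of $K_n$, needed to keep the Lipschitz remainder inside the Gronwall coefficient, is obtained by the same concentration machinery.
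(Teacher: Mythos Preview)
Your approach is correct in spirit but differs from the paper's in one structural respect, and your invocation of Lemma~\ref{lem.ave} is not quite literal.

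The paper does \emph{not} compare $u$ and $u_n$ directly. It introduces the deterministic averaged system \eqref{aKM}--\eqref{aKM-ic} with solution $v_n$ (kernel $W_n$, no randomness) and splits $\|u-u_n\|\le\|u-v_n\|+\|v_n-u_n\|$. The second piece is exactly Lemma~\ref{lem.ave}. The first piece is a purely deterministic energy estimate \eqref{energy}--\eqref{pre-Gron}: here the Lipschitz remainder \eqref{part-3} carries the \emph{original} kernel $W$, so assumption \eqref{W-1} suffices and no random row-sum control is needed. This cleanly quarantines all randomness inside Lemma~\ref{lem.ave}.

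Your one-step route works too, but note two points. First, the Lipschitz remainder in your scheme carries the random kernel $K_n$, so you need the a.s.\ uniform-in-$n$ row/column-sum bound you mention; that is Lemma~\ref{lem.Bern}, not Lemma~\ref{lem.ave}. Second, Lemma~\ref{lem.ave} as stated bounds $\|u_n-v_n\|$, not the action of $W_n-K_n$ on $D(u(y)-u(x))$. What you actually need is the engine behind it, Lemma~\ref{lem.exp}, rerun with the coefficients $c_{n,\bar i\bar k\bar l}$ in \eqref{def-cnjikl} replaced by
\[
\tilde c_{n,\bar i\bar k\bar l}=n^{d}\!\int_{Q_{n,\bar i}}\!\int_0^\infty e^{-Ls}\,\beta_{n,\bar i\bar k}(x,s)\,\beta_{n,\bar i\bar l}(x,s)\,ds\,dx,\qquad \beta_{n,\bar i\bar j}(x,t)=n^d\!\int_{Q_{n,\bar j}}\!D\bigl(u(t,y)-u(t,x)\bigr)\,dy,
\]
which are still deterministic (since $u$ is) and bounded by $L^{-1}$; the proof of Lemma~\ref{lem.exp} goes through verbatim. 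So your plan is sound once ``invoke Lemma~\ref{lem.ave}'' is read as ``repeat the concentration argument of Lemmas~\ref{lem.Bern}--\ref{lem.exp} with these coefficients.'' The paper's two-step decomposition buys modularity (deterministic Galerkin error vs.\ random sampling error are fully decoupled) at the cost of introducing $v_n$; your direct comparison is more compact but uses the random-kernel concentration in two places rather than one.
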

\begin{rem} The theorem still holds without  \eqref{W-1s}, i.e., for 
 square integrable $W$ subject to \eqref{W-1}. In this case, the last term on the right-hand side of
 \eqref{main-est} is replaced by $n^{-d(1/2-\gamma-\delta)},$ $\gamma\in (0,1/2),$ and $\delta<1/2-\gamma.$
\end{rem}

The first two terms on the right--hand side of \eqref{main-est} correspond to the error of approximation
of the initial data $g\in L^2(Q)$ and $f(u,x,t)$ by the step functions in $x$. Further, 
$\| \tilde W_n-W\|^2_{L^2(Q^2)}$ and $\| \tilde W_n- P_n \tilde W_n\|^2_{L^2(Q^2)}$ bound the error
of approximation of $W$ by a bounded step function $W_n$.
Here, the first term $\| \tilde W_n-W\|^2_{L^2(Q^2)}$ is the error of truncating $W$ and the second term
$\| \tilde W_n- P_n \tilde W_n\|^2_{L^2(Q^2)}$ is the error of approximation of the truncated function
$\tilde W_n$ by projecting it onto a finite--dimensional subspace.
Finally, the last term on the right--hand side of \eqref{main-est} is the error of the approximation 
of the nonlocal term by the random sum in \eqref{KM}.

For bounded graphons $W,$ Theorem~\ref{thm.main} implies the following result.
\begin{cor}\lbl{cor.main}
 Let $W\in L^\infty(Q^2)$. Then under the assumptions of Theorem~\ref{thm.main} we have
\be\lbl{main-est-bounded}
\begin{split}
\sup_{t\in [0,T]}  \|u(t,\cdot)-u_n(t, \cdot)\|_{L^2(Q)}  &\le C
\left( \|g-g_n\|_{L^2(Q)}+ \sup_{u\in\R, \,t\in [0,T]} \|f(u,\cdot, t)-f_n(u,\cdot, t) \|_{L^2(Q)}\right.\\
&\left.+ \| W_n- P_n W_n\|_{L^2(Q^2)} +
n^{-d(1-\gamma-\delta)/2}\right)\qquad\mbox{a.s.}.
\end{split}
\ee
where $C$ is a positive constant independent of $n.$
\end{cor}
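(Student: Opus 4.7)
The plan is to deduce \eqref{main-est-bounded} as a direct specialization of Theorem~\ref{thm.main}, exploiting the fact that for essentially bounded $W$ the truncation step in \eqref{cutoff} becomes trivial once $n$ is sufficiently large. All that really needs to happen is to verify the hypotheses of the theorem and track what the two graphon--approximation terms in \eqref{main-est} reduce to.

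First I would check that $W\in L^\infty(Q^2)$ satisfies the hypotheses of Theorem~\ref{thm.main}: because $Q^2$ has finite measure, $L^\infty(Q^2)\subset L^4(Q^2)$, and \eqref{W-1s} follows automatically from \eqref{W-1} via the crude bound $\bar W_k\le \|W\|_\infty^{k-1} W_1$ for $k\in[4]$. Next I would examine the truncation $\tilde W_n(x,y)=\alpha_n^{-1}\wedge W(x,y)$: since $\alpha_n^{-1}=n^{d\gamma}\to\infty$ (because $\gamma\in(0,1)$), there exists $n_0$ such that $\alpha_n^{-1}\ge \|W\|_\infty$ for every $n\ge n_0$. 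For such $n$, $\tilde W_n=W$ almost everywhere on $Q^2$, which forces
\[
\|\tilde W_n-W\|_{L^2(Q^2)}=0
\quad\text{and}\quad
\|\tilde W_n-P_n\tilde W_n\|_{L^2(Q^2)}=\|W-P_nW\|_{L^2(Q^2)}.
\]
Substituting these identities into \eqref{main-est} collapses the two graphon error contributions into the single piecewise-constant projection error appearing in \eqref{main-est-bounded}.

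Finally, to secure the a.s.\ bound uniformly in $n$, I would dispose of the finitely many indices $n<n_0$ by invoking Theorem~\ref{thm.exists} together with the standard local--Lipschitz ODE theory for \eqref{KM}, \eqref{KM-ic}: these guarantee that $\sup_{t\in[0,T]}\|u(t,\cdot)-u_n(t,\cdot)\|_{L^2(Q)}$ is a.s.\ finite for every fixed $n$, so the (finitely many) early values can be absorbed into the constant $C$ without affecting the asymptotic form of the estimate. No genuine obstacle appears; the corollary is essentially a bookkeeping simplification of Theorem~\ref{thm.main} enabled by the observation that truncation is vacuous in the bounded regime.
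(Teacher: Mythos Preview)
Your proposal is correct and matches the paper's intent: the corollary is stated without proof, as an immediate specialization of Theorem~\ref{thm.main} once one observes that the truncation in \eqref{cutoff} is vacuous for bounded $W$. One minor simplification: the paper's Case~\textbf{(I)} normalizes to $0\le W\le 1$, so $\alpha_n^{-1}=n^{d\gamma}\ge 1\ge W$ for every $n\ge 1$, which makes $\tilde W_n=W$ from the outset and spares you the separate handling of the finitely many indices $n<n_0$.
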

\begin{rem}\lbl{rem.important}
From \eqref{main-est-bounded} one can see how to use sparsity to optimize computation.
Already for  $d=1,$ if the largest of the two errors of
approximation of $g$ and $W$ by  step functions is $O(n^{-\kappa})$ with $1/2<\kappa<1$ 
(cf. \S\ref{sec.graphon}) and the nonlinearity $f(\cdot)$ does not  depend on $x,$ then 
taking $\gamma=1-2\kappa$ one can use sparse discretization without compromising the 
accuracy. This has obvious computational advantages over dense random and, moreover, 
deterministic spatial discretization schemes, e.g., Galerkin method. 
Sparse random discretization is even more efficient when $d>1$.
\end{rem}

The proof of Theorem~\ref{thm.main} modulo a few minor details proceeds as the proof
of convergence to the continuum limit in \cite{Med14b, Med18}.
First, the solution of the IVP \eqref{KM}, \eqref{KM-ic} is compared to that of the IVP for 
the averaged equation:
\begin{eqnarray}
\lbl{aKM}
\dot v_{n,\bar i} &=& f_{n,\bar i}(v_{n,\bar i},t) + n^{-d} \sum_{\bar j\in \nd} 
W_{n,\bar i\bar j} D(u_{n,\bar j}-u_{n,\bar i}),
\quad \bar i\in [n]^d,\\
\lbl{aKM-ic}
v_{n,\bar i}(0)&=&g_{n,\bar i}. 
\end{eqnarray}
Then the solution of the averaged problem is compared to the solution of the 
IVP \eqref{nloc}, \eqref{nloc-ic}. It is convenient to view the
solution of the averaged problem as a function on $\R^+\times Q$:
\be\lbl{step-v}
v_n(t,x) =\sum_{\bar i\in \nd} v_{n,\bar i}(t) \1_{Q_{n,\bar i}}(x).
\ee
Likewise, we interpret the solution of the discrete problem \eqref{KM}, \eqref{KM-ic} 
as a function on $\R^+\times Q:$
\be\lbl{step-u}
u_n(t,x) =\sum_{\bar i\in \nd} u_{n,\bar i}(t) \1_{Q_{n,\bar i}}(x).
\ee

We recast the IVP \eqref{aKM}, \eqref{aKM-ic} as follows
\begin{eqnarray}
\lbl{re-aKM}
\p_t v_n(t,x)&=& f_n(v_n(t,x),x,t)+ \int W_n(x,y) D\left(v_n(t,y)-v_n(t,x)\right) dy,\\
\lbl{re-aKM-ic}
v_n(0,x)&=&g_n(x).
\end{eqnarray}

The first step of the proof of convergence of the numerical scheme \eqref{KM}, \eqref{KM-ic}
is accomplished in the following lemma.
\begin{lem}\lbl{lem.ave}
Let nonnegative $W\in L^4(Q^2)$ subject to \eqref{W-1s},  and 
$\alpha_n=n^{-d\gamma},\; \gamma\in (0, 1)$ (cf.~\eqref{cutoff}).
Then for any $T>0$ for solutions of  \eqref{KM} and \eqref{aKM} subject to the 
same initial conditions,  
we have
\be\lbl{ave.statement}
\sup_{t\in [0,T]} \| u_n(t,\cdot) -v_n(t,\cdot)\|_{L^2(Q)} \le 
C n^{-d(1-\gamma-\delta)/2}\quad \mbox{a.s.},
\ee
where $0<\delta<1-\gamma,$ and positive constant $C$
independent of $n$. 
\end{lem}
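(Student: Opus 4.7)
The plan is to run the familiar comparison argument between the random system \eqref{KM} and its averaged counterpart \eqref{aKM}, as in \cite[Theorem~4.1]{Med18}, with the only substantive work being a concentration bound adapted to the multidimensional setting and to the broader sparsity range $\gamma\in(0,1)$. Set $\phi_{n,\bar i}(t)=u_{n,\bar i}(t)-v_{n,\bar i}(t)$. Subtracting \eqref{aKM} from \eqref{KM} and using the Lipschitz hypotheses \eqref{Lip} together with the row-sum bound \eqref{W-1s} for $k=1$, a routine Cauchy--Schwarz computation gives
$$
\frac{1}{2}\frac{d}{dt}\|\phi_n(t,\cdot)\|_{L^2(Q)}^2
\le C\|\phi_n(t,\cdot)\|_{L^2(Q)}^2 + \langle\phi_n(t,\cdot),\xi_n(t,\cdot)\rangle_{L^2(Q)},
$$
where the stochastic residual is
$$
\xi_{n,\bar i}(t)=\frac{1}{n^d}\sum_{\bar j\in\nd}
\left(\alpha_n^{-1}a_{n,\bar i\bar j}-W_{n,\bar i\bar j}\right)
D\bigl(v_{n,\bar j}(t)-v_{n,\bar i}(t)\bigr)
$$
and $\xi_n(t,x)=\sum_{\bar i\in\nd}\xi_{n,\bar i}(t)\1_{Q_{n,\bar i}}(x)$. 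After Cauchy--Schwarz and Gronwall, the claim reduces to showing $\sup_{t\in[0,T]}\|\xi_n(t,\cdot)\|_{L^2(Q)}\le Cn^{-d(1-\gamma-\delta)/2}$ a.s.

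\textbf{Pointwise-in-$t$ concentration.} Fix $t$ and $\bar i$. The entries $a_{n,\bar i\bar j}$ are independent Bernoulli$(\alpha_n W_{n,\bar i\bar j})$ variables, so the summands in $n^d\xi_{n,\bar i}(t)$ are independent, mean-zero, and bounded in absolute value by $\alpha_n^{-1}=n^{d\gamma}$ thanks to \eqref{bound-D} and the truncation $\tilde W_n$ in \eqref{cutoff}. Each summand has variance at most $\alpha_n^{-1}W_{n,\bar i\bar j}$, so summing over $\bar j$ and applying \eqref{W-1s} with $k=1$ bounds the total variance by $\bar W_1 n^{d(1+\gamma)}$. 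Bernstein's inequality then gives, for $\epsilon_n=n^{-d(1-\gamma-\delta)/2}$,
$$
\P\bigl(|\xi_{n,\bar i}(t)|>\epsilon_n\bigr)
\le 2\exp\bigl(-c\,n^{d\delta}\bigr),
$$
and a union bound over $\bar i\in\nd$ still leaves a term summable in $n$, so Borel--Cantelli produces the pointwise-in-$t$ bound $\|\xi_n(t,\cdot)\|_{L^2(Q)}\le C\epsilon_n$ a.s. The role of the higher integrability \eqref{W-1s} with $k=2,4$ is to tighten the variance estimate and extend the admissible sparsity to the full range $\gamma\in(0,1)$; without it one recovers only the weaker exponent of the Remark following Theorem~\ref{thm.main}.

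\textbf{Uniformity in $t$ and the main obstacle.} To pass from a pointwise-in-$t$ estimate to the supremum over $[0,T]$, I would discretize time on a mesh $0=t_0<\dots<t_M=T$ with $M=n^\kappa$ for sufficiently large $\kappa$, apply the concentration bound and a union bound at each node, and then control the modulus of continuity of $t\mapsto\xi_n(t,\cdot)$. The map is Lipschitz in $t$ with constant polynomial in $n$: indeed, $|\dot v_{n,\bar j}|$ is uniformly bounded by the right-hand side of \eqref{aKM} together with \eqref{bound-D} and the boundedness of $f$, and the prefactor $\alpha_n^{-1}$ in the definition of $\xi_n$ only contributes a polynomial-in-$n$ factor, so choosing $\kappa$ large enough absorbs it. Combining this uniform residual bound with the Gronwall step of the first paragraph yields \eqref{ave.statement}. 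The technical crux is calibrating the Bernstein exponent so that, after taking the triple union bound over $\nd\times\{t_0,\dots,t_M\}$ and the truncation error introduced by $\tilde W_n$, the resulting tail is still summable in $n$; this is precisely where \eqref{W-1s} enters and what limits the admissible range of $\delta$.
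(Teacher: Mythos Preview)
Your overall strategy is sound and leads to the stated bound, but it is genuinely different from the paper's route, and there is one small gap and one inaccuracy worth flagging.

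\textbf{Comparison with the paper.} After the Gronwall reduction, the paper does \emph{not} discretize time. Instead it observes that the averaged solution $v_n$ is deterministic, so the coefficients $b_{n,\bar i\bar j}(t)=D(v_{n,\bar j}(t)-v_{n,\bar i}(t))$ are deterministic as well. Consequently the whole time integral
$\alpha_n^{-2}\int_0^\infty e^{-Ls}\|Z_n(s)\|_{2,n^d}^2\,ds$
collapses into a single quadratic form $n^{-3d}\sum_{\bar i,\bar k,\bar l} c_{n,\bar i\bar k\bar l}\eta_{n,\bar i\bar k}\eta_{n,\bar i\bar l}$ in the centered Bernoulli variables, with bounded deterministic coefficients $c$. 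This quadratic form is then controlled by an exponential Markov inequality together with second and fourth moment bounds on $\eta$; that moment step is precisely where \eqref{W-1s} with $k=2,4$ is used. Your approach---pointwise-in-$t$ Bernstein plus a polynomial time net plus Lipschitz interpolation---is more elementary and, as written, uses only $k=1$ of \eqref{W-1s}. So your sentence attributing the role of $k=2,4$ to ``tightening the variance estimate'' in your Bernstein bound is not accurate: those higher powers are what the paper's moment method needs, not what your argument needs. What the paper gains is that it never has to worry about uniformity in $t$; what you gain is a shorter, more transparent argument with weaker hypotheses.

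\textbf{The gap.} In your Gronwall paragraph you write that the constant $C$ in front of $\|\phi_n\|_{L^2}^2$ comes from ``the row-sum bound \eqref{W-1s} for $k=1$''. It does not. After splitting the nonlocal difference so that the stochastic residual $\xi_n$ carries the deterministic weights $D(v_{n,\bar j}-v_{n,\bar i})$, the remaining cross term is
\[
(\alpha_n n^d)^{-1}\sum_{\bar j} a_{n,\bar i\bar j}\bigl[D(u_{n,\bar j}-u_{n,\bar i})-D(v_{n,\bar j}-v_{n,\bar i})\bigr],
\]
whose control requires a uniform bound on the random row sums $(\alpha_n n^d)^{-1}\sum_{\bar j} a_{n,\bar i\bar j}$, not on $\sum_{\bar j} W_{n,\bar i\bar j}$. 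The paper handles this with a separate Bernstein bound on the degrees (its Lemma~\ref{lem.Bern}), showing $\max_{\bar i}\sum_{\bar j}|\alpha_n^{-1}a_{n,\bar i\bar j}-W_{n,\bar i\bar j}|\le Kn^d$ a.s.\ for large $n$. You need the same ingredient here; it is an easy consequence of the Bernstein argument you already use, but it should be stated rather than attributed to \eqref{W-1s}. (If instead you split so that the Lipschitz term carries $W_{n,\bar i\bar j}$, then the residual carries $D(u_{n,\bar j}-u_{n,\bar i})$, which is random and correlated with $a_{n,\bar i\bar j}$, and your pointwise Bernstein step no longer applies.)
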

The proof of the lemma is technical and is relegated to Section~\ref{sec.average}.
The result still holds without for square integrable functions the additional assumption  
\eqref{W-1s} albeit for a narrower range of $\gamma\in (0,0.5)$ (cf.~\cite[Theorem~4.1]{Med18}).

\begin{proof}[Proof of Theorem~\ref{thm.main}]
Denote the difference between the solutions of the original IVP \eqref{nloc}, \eqref{nloc-ic}
and the averaged IVP \eqref{re-aKM}, \eqref{re-aKM-ic}
\be\lbl{difference}
w_n(t,x)=u(t,x)-v_n(t,x).
\ee
By subtracting \eqref{aKM} from \eqref{nloc}, multiplying the resultant equation by $w_n$, and 
integrating over $Q$, we obtain
\be\lbl{energy}
\begin{split}
\int \p_t w_n(t,x) w_n(t,x)dx &= \int \left(f(u(t,x), x,t) -f(v_n(t,x),x,t)\right) w_n(t,x)dx \\
&+ \int \left(f(v_n(t,x), x,t) -f_n(v_n(t,x),x,t)\right) w_n(t,x)dx\\
&+ \int\int W(x,y) \left[ D(u(t,y)-u(t,x)) - D(v_n(t,y)-v_n(t,x))\right]  w_n(t,x)dydx \\
&+ \int\int \left( W(x,y)-W_n(x,y)\right) D(v_n(t,y)-v_n(t,x))   w_n(t,x)dydx.
\end{split}
\ee 

Using Lipschitz continuity of $f(u,x,t)$ in $u$ and an elementary case of the Young's inequality,
we obtain
\be\lbl{part-1a}
\left|\int \left(f(u(t,x), x,t) -f(v_n(t,x),x,t)\right) w_n(t,x)dx\right|\le L_f
\int w_n(t,x)^2 dx,
\ee
\be\lbl{part-1b}
\begin{split}
\left|\int \left(f(v_n(t,x), x,t) -f_n(v_n(t,x),x,t)\right) w_n(t,x)dx\right|& \le
{1\over 2} \int \left(f(v_n(t,x), x,t) -f_n(v_n(t,x),x,t)\right)^2dx \\
&+{1\over 2 }\| w_n(t,\cdot)\|^2,
\end{split}
\ee
where $\| \cdot \|$ stands for the $L^2(Q)$-norm.
Recall that $D$ is bounded by $1$ (cf.~\eqref{bound-D}).
Using this bound and the Young's inequality, we obtain
\be\lbl{part-2}
\begin{split}
\left| \int\int \left( W(x,y)-W_n(x,y)\right) D(v_n(t,y)-v_n(t,x))   w_n(t,x)dydx\right|
&\le {1\over 2} \left\|W-W_n\right\|_{L^2(Q^2)}\\
& +{1\over 2} \left\|w_n\right\|^2.
\end{split}
\ee

Finally,  using Lipschitz continuity of $D$ and Young's inequality, we estimate
\be\lbl{part-3}
\begin{split}
 &\left|\int\int W(x,y) \left[ D(u(t,y)-u(t,x)) - D(v_n(t,y)-v_n(t,x))\right]  w_n(t,x)dydx \right| \\
  &\le L_D \int\int W(x,y) \left( |w_n(t,y)| +|w_n(t,x)|\right) |w_n(t,x)|dydx \\
&\le L_D \int\int W(x,y) \left( {1\over 2} |w_n(t,y)|^2 +{3\over 2} |w_n(t,x)|^2\right) dydx \\
& \le  {3L_D\over 2} \int\int W(x,y) |w_n(t,x)|^2 dydx  + 
{L_D\over 2} \int\int W(x,y) |w_n(t,y)|^2 dydx\\
&\le 2W_1 L_D \left\|w_n\right\|^2,
\end{split}
\ee
where we used  Fubini theorem and \eqref{W-1} in the last line.

By combining \eqref{energy}-\eqref{part-3}, we arrive at
\be\lbl{pre-Gron}
{d\over dt} \|w_n(t,\cdot)\|^2 \le L \|w_n(t,\cdot)\|^2 +
\sup_{u\in\R, \, t\in [0,T]} \|f(u,\cdot, t)-f_n(u,\cdot, t) \|^2
+ \|W_n-W\|^2,
\ee
where $L=1+2L_f +L_D(3W_1+W_2).$

By Gronwall's inequality, we have
\begin{equation*}
\begin{split}
\sup_{t\in [0,T]}  \|w_n(t, \cdot)\|  &
\le e^{LT/2} \sqrt{ \|w_n(0,\cdot)\|^2 +
\sup_{u\in\R, \, t\in [0,T]} \|f(u,\cdot \, t)-f_n(u,\cdot, t) \|^2
+ \| W_n-W\|^2_{L^2(Q^2)}
}\\
& \le e^{LT/2} \left(\|g-g_n\|_{L^2(Q)}+
\sup_{u\in\R, \, t\in [0,T]} \|f(u,\cdot,t)-f_n(u,\cdot,t) \|
+ \| W_n-W\|_{L^2(Q^2)}
\right).
\end{split}
\end{equation*}
\end{proof}

\section{Two examples}\lbl{sec.examples}
The error of approximation of the nonlocal term by a random sum, the last term on the 
right--hand side of \eqref{main-est}, is known explicitly. Next in importance is the error 
of approximation of the square integrable graphon $W$ by the step function $W_n.$
This error depends on the regularity of the graphon. In this section, we consider two representative 
examples of $W$: a singular graphon (\S~\ref{sec.singular}) and a bounded graphon
with jump discontinuities (\S~\ref{sec.graphon}). Motivated by these examples in the next section,
we will analyze the rate of convergence estimates under general assumptions on graphon $W$.

We will begin with the following estimate for H\"{o}lder continuous functions. 
To this end,
$\phi\in L^p(Q), \; p\ge 1,$ and 
$$
\phi_n(x)=\sum_{\bi\in [n]^d} \phi_{n,\bi} \1_{Q_{n,\bi}}(x), \quad \phi_{n,\bi}=n^d\int_{Q_{n,\bi}} \phi(x) dx.
$$
\begin{lem}\lbl{lem.Holder}
Suppose $\phi\in L^p(Q), \; p\ge 1,$ is a H\"{o}lder continuous function
\be\lbl{f-Holder}
|\phi(x)-\phi(y)|\le C |x-y|^\beta, \quad x,y \in Q, \; \beta\in (0,1].
\ee
Then
\be\lbl{Holder}
\|\phi-\phi_n\|_{L^p(Q)}\le C h^{\beta}.
\ee
Here and below, $h:=n^{-1}$.
\end{lem}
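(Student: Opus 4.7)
The plan is a direct cube-by-cube estimate, exploiting the fact that $\phi_{n,\bi}$ is by construction the average of $\phi$ over $Q_{n,\bi}$, so that the difference $\phi(x)-\phi_{n,\bi}$ can be written as an average of the differences $\phi(x)-\phi(y)$ over $y\in Q_{n,\bi}$, to which H\"older continuity applies uniformly because the diameter of each cube is $\sqrt{d}\,h$.

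First I would write, for $x\in Q_{n,\bi}$,
\[
\phi(x)-\phi_{n,\bi} \;=\; n^d\int_{Q_{n,\bi}}\bigl(\phi(x)-\phi(y)\bigr)\,dy,
\]
and bound the integrand by $C|x-y|^\beta\le C(\sqrt{d}\,h)^\beta$ using \eqref{f-Holder} and the fact that both $x,y$ lie in the cube $Q_{n,\bi}$ of side length $h=n^{-1}$. This gives the pointwise estimate $|\phi(x)-\phi_{n,\bi}|\le C d^{\beta/2} h^\beta$ uniformly in $x\in Q_{n,\bi}$ and in $\bi$.

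Next I would raise to the $p$-th power, integrate over $Q_{n,\bi}$ (volume $h^d$), sum over the $n^d$ indices $\bi\in[n]^d$, and use $n^d h^d=1$:
\[
\|\phi-\phi_n\|_{L^p(Q)}^p \;=\;\sum_{\bi\in[n]^d}\int_{Q_{n,\bi}}|\phi(x)-\phi_{n,\bi}|^p\,dx
\;\le\; \bigl(C d^{\beta/2}\bigr)^p\, h^{\beta p}\cdot n^d h^d
\;=\;\bigl(C d^{\beta/2}\bigr)^p h^{\beta p}.
\]
Taking $p$-th roots yields \eqref{Holder} with constant $Cd^{\beta/2}$ (absorbed into $C$).

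There is essentially no obstacle here: the argument is the standard $L^p$ estimate for the best-constant approximation on a partition, combined with the modulus of continuity $C|x-y|^\beta$. The only point worth noting is the geometric factor $\sqrt{d}$ coming from the diameter of the cube $Q_{n,\bi}$ in $\R^d$, which is harmless since it only affects the constant. The same argument works verbatim for $p=\infty$ (with $\operatorname{ess\,sup}$ replacing the integral), so the lemma holds for all $p\in[1,\infty]$.
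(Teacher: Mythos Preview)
Your proof is correct and follows essentially the same approach as the paper: write $\phi(x)-\phi_{n,\bi}$ as the average of $\phi(x)-\phi(y)$ over $Q_{n,\bi}$, bound this by the H\"older condition on the cube of diameter $\sqrt{d}\,h$, and sum. The only cosmetic difference is that the paper applies Jensen's inequality to pull the $p$-th power inside the $y$-integral before bounding, whereas you obtain a pointwise ($L^\infty$) bound first and then integrate; both routes lead to the same estimate.
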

\begin{proof}
Using Jensen's inequality and \eqref{Holder}, we have
\be\lbl{delta-p}
\begin{split}
 \|\phi-\phi_n\|^p_{L^p(Q)} & = \int_Q \left|  \sum_{\bar i\in \nd}  
\left(  \phi(x) -n^{d}\int_{Q_{n,\bar i}}  \phi(y) dy \right) \1_{Q_{n,\bar i}}(x) \right|^p 
dx \\
&=
  \sum_{\bar i\in \nd}   \int_{Q_{n,\bar i}} 
\left| n^{d}\int_{Q_{n,\bar i}}\left( \phi(x) - \phi(y)\right) dy\right|^p dx\\
&\le n^{d} \sum_{\bar i\in \nd} \int_{Q_{n,\bar i}}\int_{Q_{n,\bar i}}
\left|\phi(x) - \phi(y)\right|^p dxdy\\
& \le C^p h^{p\beta}.
\end{split}
\ee
\end{proof}
\begin{rem}\lbl{rem.Q2}
Below, we will freely apply Lemma~\ref{lem.Holder} to functions on $Q$ and on $Q^2$. The latter 
are clearly covered by the lemma by taking $Q:=Q^2$.
\end{rem}

\subsection{A singular graphon}\lbl{sec.singular}

Consider the problem of approximation by step functions of the 
singular kernel graphon
\be\lbl{Wgamma}
W(x,y) = {1\over |x-y|^\lambda}, \quad x,y \in Q=[0,1]^d,
\ee
where $0<\lambda<d/2$.

\begin{lem}\lbl{lem.example} For $\gamma\in (0,1/2)$ and $0<\lambda<d/2$ we have
\be\lbl{example.error}
\|W-W_n\|_{L^2 (Q^2)} \le \max\left\{ O\left( h^{d\gamma ({d\over 2\lambda}-1) } \right),
O\left(h^{1-d\gamma\left(1+{1\over\lambda}\right) }\right) \right\}.
\ee
\end{lem}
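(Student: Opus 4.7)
Let me outline a proof. The natural starting point is the triangle inequality
\[
\|W-W_n\|_{L^2(Q^2)}\;\le\;\|W-\tilde W_n\|_{L^2(Q^2)}\;+\;\|\tilde W_n-P_n\tilde W_n\|_{L^2(Q^2)},
\]
and I will estimate the two terms separately. Set $r:=\alpha_n^{-1/\lambda}=n^{-d\gamma/\lambda}=h^{d\gamma/\lambda}$; this is the threshold distance at which the truncation in \eqref{cutoff} kicks in, since $W(x,y)>\alpha_n^{-1}$ iff $|x-y|<r$.

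For the truncation error, $W-\tilde W_n$ vanishes off $\{|x-y|<r\}$ and is dominated there by $W(x,y)=|x-y|^{-\lambda}$, so
\[
\|W-\tilde W_n\|_{L^2(Q^2)}^{2}\le \iint_{|x-y|<r}|x-y|^{-2\lambda}\,dx\,dy\le C\int_0^{r}\rho^{d-1-2\lambda}\,d\rho,
\]
which, using $\lambda<d/2$, evaluates to $Cr^{d-2\lambda}=Ch^{d\gamma(d-2\lambda)/\lambda}$. Taking square roots reproduces the first term $O(h^{d\gamma(d/(2\lambda)-1)})$.

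For the projection error, the key observation is that $\tilde W_n$ is globally Lipschitz on $Q^2$ with Lipschitz constant $C r^{-\lambda-1}$. Indeed, in the region $|x-y|\ge r$ one has $\tilde W_n=W$ with $|\nabla W|\le C|x-y|^{-\lambda-1}\le Cr^{-\lambda-1}$; in the region $|x-y|<r$, $\tilde W_n$ is the constant $r^{-\lambda}$; and across the interface one uses the mean value estimate together with the elementary bound $||x-y|-|x'-y'||\le\sqrt{2}\,\|(x,y)-(x',y')\|$ to transfer the gradient bound at $|x-y|=r$ into a global Lipschitz estimate. With this in hand, Lemma~\ref{lem.Holder} applied on $Q^2$ (Remark~\ref{rem.Q2}) with $\beta=1$ gives
\[
\|\tilde W_n-P_n\tilde W_n\|_{L^2(Q^2)}\le Ch\cdot r^{-\lambda-1}=Ch^{1-d\gamma(\lambda+1)/\lambda}=Ch^{1-d\gamma(1+1/\lambda)},
\]
which is precisely the second term in the stated bound.

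The step that requires some care is verifying the global Lipschitz property of $\tilde W_n$ across the transition surface $\{|x-y|=r\}$: inside and outside this surface the bound is immediate, but the case where one point lies on each side needs the elementary geometric comparison mentioned above to conclude that the whole increment is controlled by $Cr^{-\lambda-1}\,\|(x,y)-(x',y')\|$. Once this is in place, the two desired powers of $h$ fall out of Lemma~\ref{lem.Holder} and the elementary radial integral, and taking the maximum gives \eqref{example.error}.
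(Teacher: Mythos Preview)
Your argument is correct and follows essentially the same route as the paper: split via the triangle inequality into the truncation error (handled by a radial integral over $\{|x-y|<r\}$) and the projection error (handled by the Lipschitz bound on $\tilde W_n$ together with Lemma~\ref{lem.Holder}). The only cosmetic differences are that the paper integrates the exact difference $(|x-y|^{-\lambda}-\alpha_n^{-1})^2$ rather than your cruder majorant $|x-y|^{-2\lambda}$, and handles the Lipschitz constant via $\esssup|\nabla\tilde W_n|$ rather than your explicit cross-interface check; both yield the same exponents.
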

\begin{proof}
\begin{enumerate}
\item Below, we will use the following change of variables
$(x,y)= T(u,v)$ for $(x,y)$ and $(u,v)$ from $\R^{2d},$
defined by
\be\lbl{Tuv}
u_i=x_i-y_i \quad\mbox{and}\quad v_i=x_i+y_i,\; i\in [d].
\ee
\item Let $\alpha_n=n^{-d\gamma}$ and recall that $\tilde W_n=h^{-d\gamma}\wedge W$.
Denote $\tilde Q=\left\{ (x,y)\in Q^2:\; |x-y|^{-\lambda}\ge h^{-d\gamma}\right\}$. 
Further,
\be\lbl{W-tildeW}
\begin{split}
\|W-\tilde W_n \|^2_{L^2(Q^2)} &= \int_{\tilde Q^2} 
\left( {1\over |x-y|^{\lambda}} -n^{d\gamma}\right)^2 dxdy\\
&\le C_1 \int_{ \{ |u| \le  h^{{d\gamma\over \lambda}} \} } 
\left({1\over |u|^{\lambda}}-n^{d\gamma}\right)^2 du\\
&\le C_2 \int_0^{ h^{d\gamma\over \lambda} } 
\left( {1\over r^\lambda}-n^{d\gamma}\right)^2 r^{d-1} dr\\
&=O\left((d-2\lambda)^{-1} h^{2d\gamma \left({d\over 2\lambda}- 1\right)}\right),
\end{split}
\ee
where we used \eqref{Tuv} followed by the change to polar coordinates.

Thus,
\be\lbl{cutoff}
\|W-\tilde W_n \|_{L^2(Q^2)}=O\left( h^{d\gamma \left({d\over 2\lambda}- 1\right)} \right), 
0<\lambda<d/2,\; \gamma>0.
\ee

\item
Next we turn to estimating $\left\| \tilde W_n- W_n \right\|_{L^2(Q^2)}$.  Since the truncated function 
$\tilde W_n$ is Lipschitz continuous on $Q^2$, by Lemma~\ref{lem.Holder},
$$
\left\| \tilde W_n- W_n \right\|_{L^2(Q^2)}\le L(\tilde W_n)h.
$$
It remains to estimate the Lipschitz constant $L(\tilde W_n)\le \esssup_{Q^2}|\nabla \tilde W_n|.$
On $Q^2-\tilde Q$, 
\be\lbl{gradient}
|\nabla \tilde W_n|=\lambda |x-y|^{-1-\lambda}\left|\nabla_{x,y} |x-y|\right|=\sqrt{2} 
\lambda |x-y|^{-1-\lambda}.
\ee
The gradient approaches its greatest value as $|x-y|\searrow h^{d\gamma\over\lambda}$.
Thus,
$$
 \esssup_{Q^2}|\nabla \tilde W_n| =\sqrt{2}\lambda  h^{-d\gamma\left({1\over \lambda}+1\right)}
$$
and
\be\lbl{second-estimate}
\left\| \tilde W_n- W_n \right\|_{L^2(Q^2)}=O\left(h^{1-d\gamma\left(1+{1\over\lambda}\right)}\right).
\ee
\item The statement of the lemma follows \eqref{cutoff} and \eqref{second-estimate} and the
triangle inequality.
\end{enumerate}
\end{proof}

Next we choose $\gamma$ to optimize the rate 
of convergence in \eqref{example.error}. By setting the two exponents of $h$ on the right--hand side 
of  \eqref{example.error} equal, we see that the rate 
is optimal for 
$$
\gamma={2\lambda\over d(d+2)}.
$$
With this choice of $\gamma,$
$$
 \|W-W_n\|_{L^2(Q^2)}=O(h^{{2\lambda\over d+2}\left({d\over 2\lambda} - 1\right)}).
$$
To optimize the rate of convergence of the numerical scheme \eqref{KM}, \eqref{KM-ic},
one has to choose $\gamma\in (0,1)$ to maximize the smallest of the following three exponents
$$
d\gamma\left({d\over 2\lambda}-1\right), \quad
1-d\gamma\left(1+{1\over \lambda}\right),\quad 
{d\over 2}\left(1-\gamma\right),
$$
where the last exponent comes from the error of the Monte Carlo approximation (cf.~\eqref{main-est}).

\subsection{$\{0,1\}$-valued functions}\lbl{sec.graphon}
The following example is adapted from \cite{Med14a}. It shows how  jump discontinuities affect 
the rate of convergence of approximation by piecewise constant functions.
The accuracy of approximation depends on the geometry of the hypersurface of 
discontinuity, more precisely, on its fractal dimension.

Let $Q^+$ be a closed subset of $Q$ and consider 
\be\lbl{graphon}
f(x)=\left\{ \begin{array}{ll}
1,&  x\in Q^+\\
0,& \mbox{otherwise}.
\end{array}
\right.
\ee
Denote by $\p Q^+$ the boundary of $Q$ and recall the upper box-counting dimension of $\p Q^+$
\be\lbl{box}
\beta:=\overline{\lim}_{h\to 0} {\log N_h (\p Q^+)\over -\log h},
\ee
where $N_h (\p Q^+)$ stands for the number of $Q_{n,\bi},\; \bi, \bar j \in\nd,$ having
nonempty intersection with $\p Q^+$ (cf.~\cite{Falc-FracGeometry}). 

\begin{lem}\lbl{lem.graphon}
\be\lbl{d-gamma}
\|\phi-\phi_n\|_{L^p(Q)}\le C h^{{d-\beta\over p}},
\ee
for some positive $C$ independent on $n$.
\end{lem}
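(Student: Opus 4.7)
The plan is to exploit the $\{0,1\}$-valuedness of $\phi$ so that only cubes meeting the discontinuity set contribute to the error, and then use the definition of the upper box-counting dimension to count those cubes.

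First I would partition the index set $[n]^d$ into the indices $\bar i$ for which $Q_{n,\bar i} \cap \partial Q^+ = \emptyset$ and the indices for which $Q_{n,\bar i} \cap \partial Q^+ \ne \emptyset$. On each cube of the first type, $Q_{n,\bar i}$ lies entirely inside $Q^+$ or entirely in its complement (since $Q_{n,\bar i}$ is connected and disjoint from $\partial Q^+$), so $\phi$ is constant on $Q_{n,\bar i}$; hence its cell-average $\phi_{n,\bar i}$ equals that same constant and $\phi-\phi_n$ vanishes on $Q_{n,\bar i}$. Thus
\begin{equation*}
\|\phi-\phi_n\|_{L^p(Q)}^p \;=\; \sum_{\bar i:\, Q_{n,\bar i}\cap \partial Q^+\ne \emptyset} \int_{Q_{n,\bar i}} |\phi(x)-\phi_{n,\bar i}|^p\,dx.
\end{equation*}

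Next I would estimate each surviving integrand trivially: since $\phi\in\{0,1\}$ we have $\phi_{n,\bar i}\in [0,1]$, so $|\phi-\phi_{n,\bar i}|\le 1$. Together with $|Q_{n,\bar i}|=h^d$ this gives
\begin{equation*}
\|\phi-\phi_n\|_{L^p(Q)}^p \;\le\; N_h(\partial Q^+)\cdot h^d,
\end{equation*}
where $N_h(\partial Q^+)$ is exactly the counting function appearing in the definition \eqref{box}. Finally, I would apply the upper box-counting bound $N_h(\partial Q^+)\le C h^{-\beta}$ (valid up to an arbitrarily small loss in the exponent, absorbed into the constant; strictly speaking one replaces $\beta$ by $\beta+\varepsilon$ for small $\varepsilon>0$ and readjusts $C$), obtaining $\|\phi-\phi_n\|_{L^p(Q)}^p \le C h^{d-\beta}$ and taking $p$-th roots yields \eqref{d-gamma}.

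The only delicate point is the passage from the $\limsup$ definition of $\beta$ to a pointwise bound on $N_h(\partial Q^+)$; this is the standard caveat that the clean exponent $(d-\beta)/p$ really denotes $(d-\beta-\varepsilon)/p$ for any $\varepsilon>0$, but the argument is otherwise elementary. All the structural work (Hölder regularity, Jensen's inequality, etc.) used in Lemma~\ref{lem.Holder} is unnecessary here because the indicator structure collapses the sum to a purely geometric count of boundary-intersecting cubes.
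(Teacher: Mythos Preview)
Your argument is correct and follows essentially the same route as the paper: reduce to cells meeting $\partial Q^+$, bound each such cell's contribution by $h^d$, and invoke the box-counting definition to get $N_h(\partial Q^+)\le Ch^{-\beta}$. The only difference is cosmetic---the paper first passes through the Jensen-inequality double-integral form (as in the proof of Lemma~\ref{lem.Holder}) before observing that non-boundary cells drop out, whereas you bound $|\phi-\phi_{n,\bar i}|\le 1$ directly; your shortcut is legitimate and slightly cleaner, and your remark about the $\varepsilon$-loss hidden in the $\limsup$ definition of $\beta$ is a caveat the paper leaves implicit.
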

\begin{proof}
As in \eqref{delta-p}, we have
\begin{equation}\lbl{Jensen2}
\|\phi-\phi_n\|_{L^p(Q^2)}^p 
\le h^{-d} \sum_{\bi\in\nd} \int_{Q_{n,\bi}} \int_{Q_{n,\bi}} \left| f(x)-f(z) \right|^pdzdx.
\end{equation}
Note that the only nonzero terms in the sum on the right--hand side of \eqref{Jensen2}
are the integrals over $Q_{n,\bi}\times Q_{n,\bar j}$'s having nonempty intersection with $\p Q^+.$
Thus,
\be\lbl{direct}
\|\phi-\phi_n\|_{L^p(Q)}^p = h^{d} N_h(\p Q^+) \le C h^{d-\beta},
\ee
where we used \eqref{box}.
\end{proof}

\begin{rem} 
Note that as $\beta\to d-0$ the rate of convergence in \eqref{d-gamma}
can be made arbitrarily low.
\end{rem}

\section{The rate of convergence of the numerical method}\lbl{sec.rate}
\setcounter{equation}{0}
\subsection{Approximation by step functions}
In this section, we address the rate of convergence of the Galerkin component of the 
numerical scheme \eqref{KM}, \eqref{KM-ic}. Specifically, we study the error of the 
approximation 
of the graphon $W\in L^2(Q^2)$ and the initial data $g\in L^2(Q)$
by step functions. 

We will need an $L^p$-modulus of continuity of function on a unit $d$-cube $Q=[0,1]^d$.  
In fact, we only need the $L^2$-modulus of continuity, but present the analysis in 
the more general $L^p$-setting, since this does not require any extra effort.
For functions on the 
real line, the definition of the $L^p$-modulus of continuity can be found in \cite{Akh-Approx,
DeVore-book}. Here, we present a suitable adaptation of this definition for the problem at hand.
\begin{df}\lbl{df.Lip}
For $\phi\in L^p(Q)$
we define the $L^p$-modulus of continuity 
\be\lbl{Lpmodulus}
\omega_{p}(\phi,\delta)=\sup_{|\xi|_\infty \le \delta} \|\phi(\cdot+\xi)-\phi(\cdot)\|_{L^p(Q_\xi)},\; 
\delta>0,
\ee
where $|\xi|_\infty:=\max_{i\in [d]} |\xi_i|$ and  $Q_\xi=\{x\in Q:\; x+\xi\in Q\}$. 
\end{df}

For $\alpha\in (0,1],$ we define a generalized Lipschitz space\footnote{
Below, we will freely apply the definitions and various estimates established for
functions on $Q$ to functions on $Q^2,$ for which they are trivially valid by setting $d:=2d$.
In particular, the definitions of 
the modulus of continuity and the corresponding Lipschitz spaces obviously translate to functions
on $Q^2$.}
\be\lbl{Lip-space}
\operatorname{Lip}\left(\alpha, L^p(Q)\right)=\left\{ \phi\in L^p(Q):\; \exists 
C>0 :\; \omega_p(\phi,\delta)\le C\delta^\alpha\right\}.
\ee
Clearly $\operatorname{Lip}\left(\alpha, L^p(Q)\right)$ contains 
$\alpha$-H\"{o}lder continuous functions. However, Lipschitz spaces are much larger than H\"{o}lder
spaces. For instance, $\operatorname{Lip}\left(1/p, L^p(Q)\right)$ contains discontinuous functions.

Below, we express the error of approximation of $\phi\in L^p(Q)$ by a step 
function through $\omega_{p}(\phi, h)$. 
The analysis works out a little cleaner for dyadic discretization of $Q$, which will be assumed for the 
remainder of this section. Thus, we approximate 
$\phi\in L^p(Q)$ by  a piecewise constant function
\be\lbl{f-2m}
\phi_{2^m}(x)=\sum_{\bar i\in [2^m]^d} \phi_{Q_{2^m,\bar i}} \1_{Q_{2^m,\bar i}} (x),
\ee
where $\phi_{Q_{2^m,\bar i}}$ stands for the mean value of $\phi$ on $Q_{2^m,\bar i}$
$$
\phi_{Q_{2^m,\bar i}}(x) =2^{md} \int_{Q_{2^m,\bar i}} \phi (x) dx, \; i\in [2^m].
$$

\begin{lem}\lbl{lem.dyadic}
For $\phi\in\operatorname{Lip}\left(\alpha, L^p(Q)\right)$, we have
\be\lbl{dyadic}
\left\|\phi-\phi_{2^m}\right\|_{L^p(Q)} \le C2^{-\alpha m},
\ee
where $C$ is independent of $m$.
\end{lem}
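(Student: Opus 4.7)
The plan is to run exactly the Jensen-based calculation used in Lemma~\ref{lem.Holder}, but replace the pointwise H\"{o}lder hypothesis by an estimate in terms of the $L^p$-modulus of continuity $\omega_p(\phi,\cdot)$.

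First, starting from the definition \eqref{f-2m}, on each dyadic cube $Q_{2^m,\bar i}$ of side $2^{-m}$ we write $\phi(x)-\phi_{Q_{2^m,\bar i}}=2^{md}\int_{Q_{2^m,\bar i}}(\phi(x)-\phi(y))\,dy$. Applying Jensen's inequality as in \eqref{delta-p} produces
\[
\|\phi-\phi_{2^m}\|_{L^p(Q)}^p \le 2^{md}\sum_{\bar i\in[2^m]^d}\int_{Q_{2^m,\bar i}}\int_{Q_{2^m,\bar i}}|\phi(x)-\phi(y)|^p\,dy\,dx.
\]

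Second, I would perform the change of variables $\xi=y-x$ in the inner integral and then swap the order of integration. For $(x,y)$ in a common cube $Q_{2^m,\bar i}$, necessarily $|\xi|_\infty\le 2^{-m}$, so after summing over $\bar i$ the pairs $(x,x+\xi)$ lying in a common dyadic cube form a subset of $\{(x,\xi):\xi\in[-2^{-m},2^{-m}]^d,\;x\in Q_\xi\}$. This yields
\[
\sum_{\bar i}\int_{Q_{2^m,\bar i}}\int_{Q_{2^m,\bar i}}|\phi(x)-\phi(y)|^p\,dy\,dx\le \int_{[-2^{-m},2^{-m}]^d}\|\phi(\cdot+\xi)-\phi(\cdot)\|_{L^p(Q_\xi)}^p\,d\xi,
\]
and the inner $L^p$-norm is bounded by $\omega_p(\phi,2^{-m})^p$ uniformly in $\xi$ by Definition~\ref{df.Lip}. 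The $\xi$-integration then contributes a factor $(2\cdot 2^{-m})^d=2^d 2^{-md}$.

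Combining the two steps gives $\|\phi-\phi_{2^m}\|_{L^p(Q)}^p\le 2^d\,\omega_p(\phi,2^{-m})^p$, and invoking the hypothesis $\phi\in\operatorname{Lip}(\alpha,L^p(Q))$ from \eqref{Lip-space} bounds $\omega_p(\phi,2^{-m})\le C2^{-\alpha m}$. The only mildly delicate point, and the one I would be careful to justify, is the bookkeeping in the change of variables: one must verify that as $\bar i$ ranges over $[2^m]^d$ the union of pairs in a common dyadic cube is contained in (not equal to) the full shift region, so that the pointwise bound $\omega_p(\phi,2^{-m})^p$ is a valid majorant. Nothing else beyond Jensen, Fubini, and the definition of $\omega_p$ is needed.
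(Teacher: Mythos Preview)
Your proof is correct and takes a genuinely different route from the paper's. The paper argues by telescoping: it first bounds the successive difference $\|\phi_{2^m}-\phi_{2^{m+1}}\|_{L^p(Q)}\le C_{d,p}\,\omega_p(\phi,2^{-(m+1)})$ by splitting each dyadic cube into its $2^d$ children, then sums the resulting geometric series $\sum_{k\ge m}C2^{-\alpha k}$ and lets $M\to\infty$ to pass from $\phi_{2^M}$ to $\phi$. Your argument is more direct: after Jensen you introduce the shift $\xi=y-x$, observe that pairs in a common cube satisfy $|\xi|_\infty\le 2^{-m}$, and dominate the sum over cubes by the single integral $\int_{|\xi|_\infty\le 2^{-m}}\|\phi(\cdot+\xi)-\phi(\cdot)\|_{L^p(Q_\xi)}^p\,d\xi$. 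This yields the sharper and cleaner bound $\|\phi-\phi_{2^m}\|_{L^p(Q)}\le 2^{d/p}\,\omega_p(\phi,2^{-m})$ in one step, with no need for the auxiliary convergence $\phi_{2^M}\to\phi$ and no geometric-series constant $1/(1-2^{-\alpha})$; it also works verbatim for any uniform mesh of side $h$, not just the dyadic sequence. The telescoping approach, on the other hand, is the standard device from constructive approximation and generalizes more readily when one wants to trade the modulus of continuity for other smoothness measures (e.g.\ higher-order moduli or Besov-type norms), where a direct Jensen argument is less immediate.
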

\begin{proof}
Fix $m\in\N$ and denote $h:=2^{-m}$. To simplify notation, throughout the proof
we drop $2^m$ in the subscript of  $x_{2^m,i}$ and $Q_{2^m, \bar i}$.

We write
\be\lbl{f-2m+1}
\phi_{2^{m+1}}(x)=\sum_{\bar i\in [2^m]^d} \sum_{\bar j\in \{0,1\}^d} 
\left( {2\over h} \right)^d \int_{Q^\prime_{\bar i}} 
\phi\left( s+ \bar j {h\over 2}\right) ds \1_{Q_{\bar i}^{\bar j}}(x),
\ee
where 
$$
s+ \bar j {h\over 2} =\left( s_1+j_1 {h\over 2}, s_2+j_2 {h\over 2},\dots, s_d+j_d {h\over 2}
\right),
$$
$$
{Q_{\bar i}^{\bar j}}=\left[ x_{i_1-1}+j_1 {h\over 2} , x_{i_1-1}+(j_1+1) {h\over 2} \right)
\times\dots\times
\left[ x_{i_d-1}+j_d {h\over 2} , x_{i_d-1}+(j_d+1) {h\over 2} \right),
$$
and 
$$
{Q_{\bar i}^\prime}=\left[ x_{i_1-1}, x_{i_1-1}+ {h\over 2} \right)
\times\dots\times
\left[ x_{i_d-1},  x_{i_d-1}+{h\over 2} \right).
$$

Rewrite \eqref{f-2m}
\be\lbl{newf2m}
\phi_{2^{m}}(x) =\sum_{\bar i\in [2^m]^d} \sum_{\bar j\in \{0,1\}^d} \left( {1\over h} \right)^d 
\left\{ \sum_{\bar k\in \{0,1\}^d}
\int_{Q^\prime_{\bar i}} 
\phi\left( s+ \bar k {h\over 2}\right) ds \right\}\1_{Q_{\bar i}^{\bar j}}(x).
\ee

By subtracting \eqref{f-2m+1} from \eqref{newf2m}
we have 
\begin{equation*}
\phi_{2^m}-\phi_{2^{m+1}} = 
\sum_{\bar i\in [2^m]^d} \sum_{\bar j \in \{0,1\}^d} \,{1\over h^d}\left\{
\sum_{\stackrel{\bar k \in \{0,1\}^d} {\bar k\neq\bar j}}
\int_{Q^\prime_{\bar i}} 
\left[  \phi \left( s+ \bar k {h\over 2} \right)- \phi \left( s+ \bar j {h\over 2} \right) \right] ds\right\}\,
\1_{Q^{\bar j}_{\bar i}}.
\end{equation*}
Further,
\begin{equation} \lbl{delta-f-p}
\left| \phi_{2^m}-\phi_{2^{m+1}} \right|^p \le  
\sum_{\bar i\in [2^m]^d} \sum_{\bar j \in \{0,1\}^d} \,
\sum_{\stackrel{\bar k \in \{0,1\}^d} {\bar k\neq\bar j}}
\left| \int_{Q^\prime_{\bar i}} 
\left[ h^{-d} \phi\left( s+ \bar k {h\over 2} \right)- \phi\left( s+ \bar j {h\over 2} \right) \right] ds\right|^p 
\; \1_{Q^{\bar j}_{\bar i}}.
\end{equation}

Integrating both sides of \eqref{delta-f-p} over $Q$ and using
Jensen's inequality, we continue
\begin{equation} \lbl{delta-f-Lp}
\begin{split}
\|\phi_{2^m}-\phi_{2^{m+1}}\|^p_{L^p(Q)} & \le 
\sum_{\bar j \in \{0,1\}^d} \,
\sum_{\stackrel{\bar k \in \{0,1\}^d} {\bar k\neq\bar j}}\,
\sum_{\bar i\in [2^m]^d} \int_{Q^\prime_{\bar i}} 
\left|  \phi \left( s+ k {h\over 2} \right)- \phi\left( s+ \bar j {h\over 2} \right) \right|^p ds\\
& \le 2^d(2^d-1) \omega^p_p(\phi, h).
\end{split}
\end{equation}

Thus,
$$
\|\phi_{2^m}- \phi_{2^{m+1}}\|_{L^p(Q)}\le \left[2^d(2^d-1)\right]^{1/p} \omega_p(\phi, 2^{-(m+1)})
=:C_{d,p} \omega_p(\phi, 2^{-(m+1)}).
$$
Since $\phi\in \operatorname{Lip}\left(\alpha, L^p(Q)\right)$, we have
\be\lbl{subtract-m}
\|\phi_{2^m}- \phi_{2^{m+1}}\|_{L^p(Q)}\le C 2^{-\alpha m},
\ee
where $C$ depends on $\phi, d,$ and $p$ but not $m$.

Let $m\in\N$ be arbitrary but fixed. For any integer $M>m$ we have
\begin{equation}\lbl{bound-dyadic}
\begin{split}
\|\phi_{2^M}-\phi_{2^m}\|_{L^p(Q)} &
=\left\| \sum^{M-1}_{k=m}\left( \phi_{2^{k+1}}- \phi_{2^k}\right)\right\|_{L^p(Q)}
\le \sum_{k=m}^\infty \left\| \phi_{2^{k+1}}- \phi_{2^k}\right\|_{L^p(Q)}\\
&=\left\| \sum^\infty_{k=m}\left( \phi_{2^{k+1}}- \phi_{2^k}\right)\right\|_{L^p(Q)}
\le \sum_{k=m}^\infty \left\| \phi_{2^{k+1}}- \phi_{2^k}\right\|_{L^p(Q)}\\
&\le  \sum^\infty_{k=m} \omega_p(\phi, 2^{-(k+1)})
\le 2^{-p+1} \sum^\infty_{k=m} C 2^{\alpha(k+1)} 
\le C 2^{-\alpha m}.
\end{split}
\end{equation}
By passing $M$ to infinity in \eqref{bound-dyadic}, we get \eqref{dyadic}.
\end{proof}

\subsection{The rate of convergence}
We now can combine Theorem~\ref{thm.main} and Lemma~\ref{dyadic} to estimate 
the convergence rate for \eqref{KM}, \eqref{KM-ic}. 
For the model with a bounded graphon $W$
(cf.~\textbf{(I)}, Section~\ref{sec.model}) we have the following theorem. 

\begin{thm}\lbl{thm.rate}
Suppose that in addition to the assumptions of Theorem~\ref{thm.main}, for some $\alpha_i\in (0,1],\; i\in [3],$
$g\in \operatorname{Lip}\left(\alpha_1, L^2(Q)\right),$ 
$W\in\operatorname{Lip}\left(\alpha_2, L^2(Q^2)\right)\cap L^\infty(Q^2),$ and $f(u,\cdot,t)\in \operatorname{Lip}\left(\alpha_3, L^2(Q)\right)$
uniformly for $(u,t)\in \R\times [0,T],$ i.e.,
$$
\omega_2 (f(u,\cdot,t),\delta)\le C\delta^{\alpha_3}
$$ 
where $C>0$ is independent of $(u,t).$

Then
\be\lbl{rate}
\sup_{t\in [0,T]} \| u(t,\cdot)-u_n(t,\cdot)\|\le Cn^{-\alpha}, \quad \alpha=\min\{\alpha_1, \alpha_2,\alpha_3, {1\over 2}-\gamma\},
\ee
where $C$ is independent of $n$.
\end{thm}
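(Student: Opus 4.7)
The proof will insert the regularity hypotheses on $g$, $W$, and $f$ into the a priori estimate of Theorem~\ref{thm.main} --- more precisely, its bounded-graphon refinement Corollary~\ref{cor.main}, since $W \in L^\infty(Q^2)$ by assumption --- and bound each of the four resulting terms by a power of $n$ via Lemma~\ref{lem.dyadic}. The desired rate \eqref{rate} then follows from the arithmetic fact that a finite sum of powers of $n^{-1}$ is controlled by the slowest decaying term.

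Step by step: first, for the initial data, $g \in \operatorname{Lip}(\alpha_1, L^2(Q))$ and $g_n$ is the cell-mean piecewise constant approximation from \eqref{where}, which is exactly the object treated in Lemma~\ref{lem.dyadic}, so $\|g - g_n\|_{L^2(Q)} = O(n^{-\alpha_1})$. Second, the hypothesis on $f$ provides a Lipschitz-modulus estimate whose constant is uniform in $(u,t) \in \R \times [0,T]$; applying Lemma~\ref{lem.dyadic} to $f(u,\cdot,t)$ pointwise in $(u,t)$ and then taking the supremum yields $\sup_{u,t}\|f(u,\cdot,t) - f_n(u,\cdot,t)\|_{L^2(Q)} = O(n^{-\alpha_3})$. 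Third, because $W \in L^\infty(Q^2)$ and $\alpha_n^{-1} = n^{d\gamma} \to \infty$, for all sufficiently large $n$ the truncation $\tilde W_n$ coincides with $W$, so the truncation error vanishes and the projection error reduces to $\|W - P_n W\|_{L^2(Q^2)}$; invoking Lemma~\ref{lem.dyadic} on the product cube $Q^2$ (cf.~Remark~\ref{rem.Q2}) together with $W \in \operatorname{Lip}(\alpha_2, L^2(Q^2))$ gives $O(n^{-\alpha_2})$. Finally, the Monte Carlo term in \eqref{main-est-bounded} is $n^{-d(1-\gamma-\delta)/2}$, and for any $d \geq 1$ one can take $\delta>0$ small enough that this exponent is at least $\tfrac12 - \gamma$.

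Summing the four bounds and absorbing constants produces
\[
\sup_{t\in[0,T]} \|u(t,\cdot) - u_n(t,\cdot)\|_{L^2(Q)} \le C\bigl(n^{-\alpha_1} + n^{-\alpha_2} + n^{-\alpha_3} + n^{-(1/2-\gamma)}\bigr) \le C' n^{-\alpha},
\]
with $\alpha = \min\{\alpha_1,\alpha_2,\alpha_3,\tfrac12-\gamma\}$, which is exactly \eqref{rate}.

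The main obstacle I anticipate is reconciling the formulation of Lemma~\ref{lem.dyadic}, which is stated for dyadic partitions $n = 2^m$, with the general $n$ appearing in the theorem. The routine fix is to compare an arbitrary $n$-partition to the nearest dyadic refinement, losing only a dimensional constant in the estimate; alternatively one can restrict attention to $n$ a power of $2$ without affecting convergence rates. A secondary, purely bookkeeping concern is ensuring that the modulus-of-continuity constant from the hypothesis on $f(u,\cdot,t)$ is uniform in $(u,t)$ before taking the supremum over $u$ and $t$ --- this uniformity is built into the hypothesis, so no additional argument is required beyond noting its role.
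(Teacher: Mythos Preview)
Your proposal is correct and takes exactly the approach the paper intends: the paper gives no explicit proof of Theorem~\ref{thm.rate} beyond the sentence preceding it (``We now can combine Theorem~\ref{thm.main} and Lemma~\ref{lem.dyadic}''), and your argument spells out precisely that combination via Corollary~\ref{cor.main}. Your observations about the dyadic-versus-general $n$ issue and the need for $\delta\le\gamma$ in bounding the Monte Carlo exponent by $\tfrac12-\gamma$ are legitimate technical points that the paper leaves implicit.
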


If $W\in L^2(Q^2)$ has singularities then
the convergence rate may also depend on the accuracy of approximation of $W$ by the truncated function 
$\tilde W_n$. We do not estimate the truncation error for a general $W\in L^2(Q^2)$. For an example
of how this error can be estimated for a given graphon in practice, we refer to the 
example in \S~\ref{sec.singular}. 

\begin{figure}
\begin{center}
\textbf{a}\includegraphics[height=1.5in, width=2.0in]{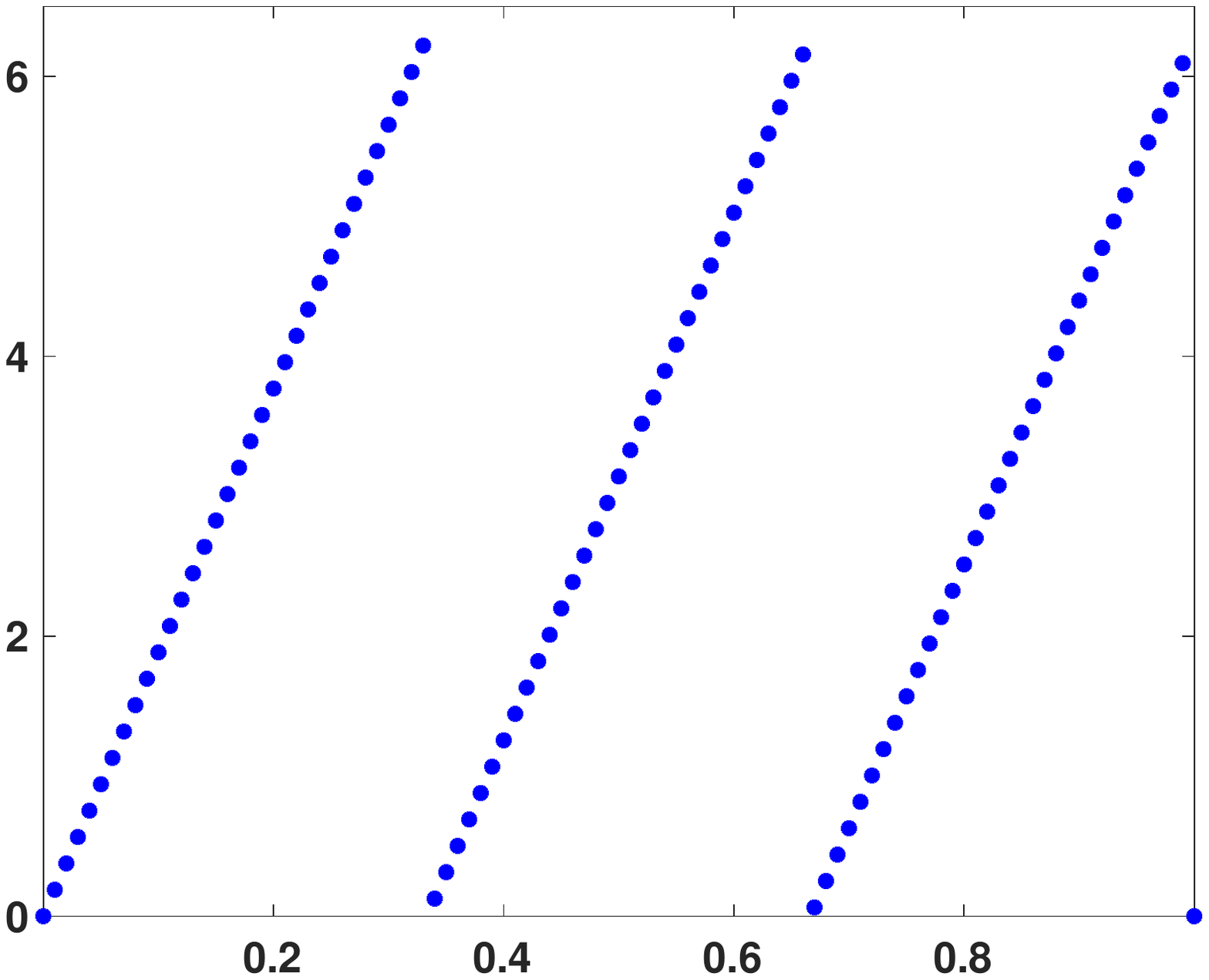}
\textbf{b}\includegraphics[height=1.5in, width=2.0in]{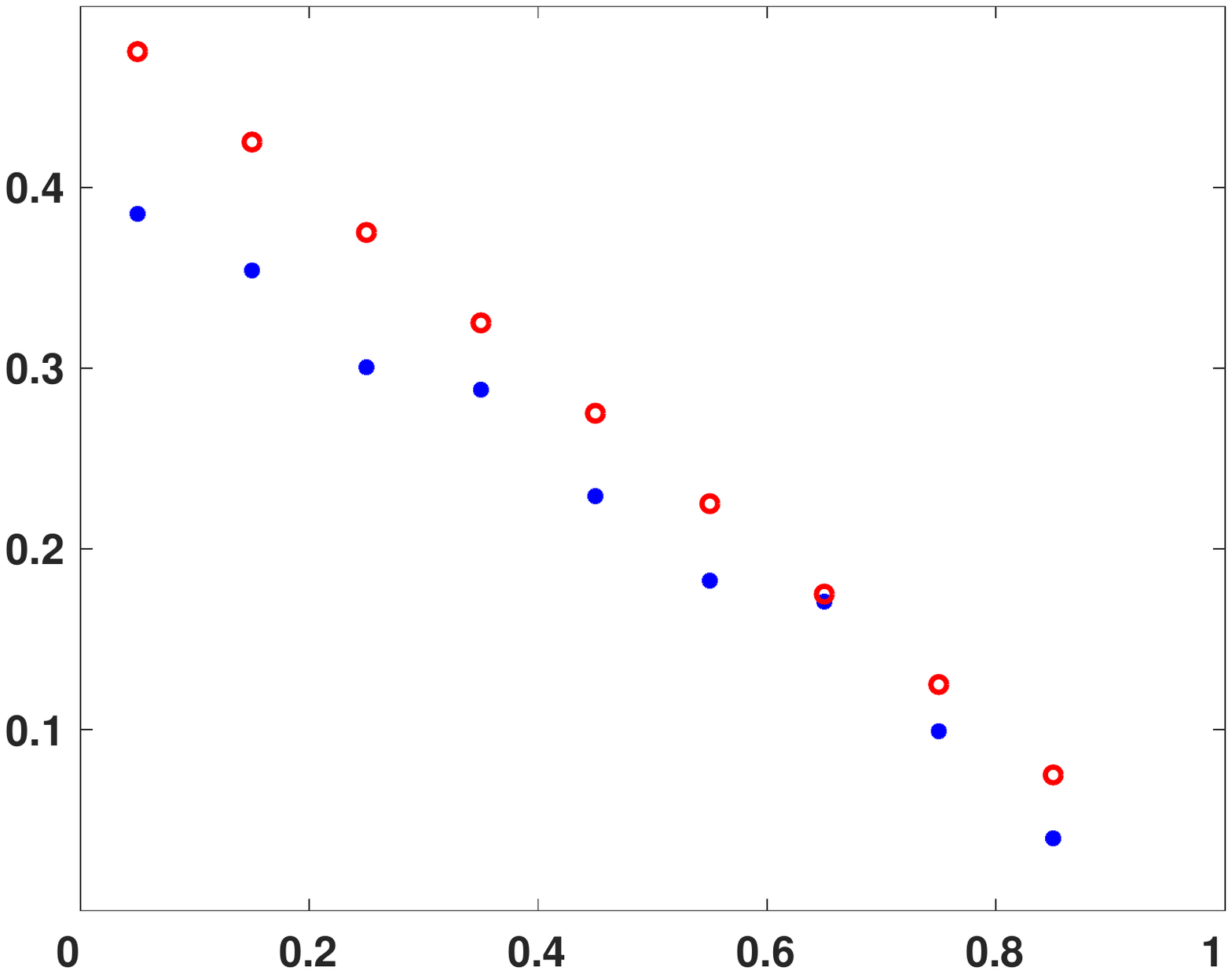}
\end{center}
\caption{a) The 3-twisted state used to initialize the Kuramoto model \eqref{cKM}.
b) The numerically estimated exponent characterizing convergence of the numerical
scheme \eqref{KM}, \eqref{KM-ic}, $\alpha_\gamma$, is plotted as a function of $\gamma$
(see Section~\ref{sec.numerics}). The numerical estimates and the theoretical predictions are
plotted using of the blue stars and red circles respectively.
}
\lbl{f.rate}
\end{figure} 
\section{Numerical example}\lbl{sec.numerics}
In this section, we illustrate convergence analysis in the previous sections with a numerical example. 
To this end, we consider an IVP for the continuum Kuramoto model with nonlocal nearest--neighbor 
coupling \cite{MedWri17}:
\begin{eqnarray}\lbl{cKM}
\partial_t u(t,x) &=&\omega + \int_{[0,1]} K(y-x) \sin\left(u(t,x)-u(t,y)\right),\\
\lbl{cKM-ic}
u(0,x)&=& u^{(q)}(x),
\end{eqnarray}
where $u(t,x)\in \T,\; \T=\R/2\pi\Z,$ stands for the phase of the oscillator at $x\in [0,1]$,
$\omega\in \R$ is its intrinsic frequency. Function $K,$ describing  the connectivity of the 
network, is first defined on $[0,1/2)$ by
\be\lbl{def-K}
K(x)=\1_{\{y:\, |y|\le r\}}(x), \quad r\in (-1/2,1/2),
\ee
and then extended as a $1$--periodic function on $\R$. The initial condition
\be\lbl{q-twist}
u^{(q)}(x)=2\pi \left( qx \mod 1\right), \quad q\in\Z,
\ee
is called a $q$--twisted state (Figure~\ref{f.rate}a). For $\omega=0$, $u^{(q)}$ is a stationary solution
of \eqref{cKM}. Thus,  
\be\lbl{TW-solution}
u(t,x)= \left(2\pi qx +\omega t\right) \mod 2\pi
\ee
solves the IVP \eqref{cKM}, \eqref{cKM-ic}. We use the explicit solution \eqref{TW-solution}
to compute the error of the numerical integration of \eqref{cKM}, \eqref{cKM-ic}.

To estimate the rate of convergence of the numerical scheme \eqref{KM}, \eqref{KM-ic}
we use the following values of parameters: $r=0.2,$ $\omega=0.5,$ and $q=3.$ 
For these parameter values, the travelling wave solution \eqref{TW-solution} is unstable.
We integrated \eqref{cKM} numerically for $t\in[0,1],$ using the fourth order Runge--Kutta method
with the time step $10^{-2}.$ Note that the error of the Runge--Kutta method, i.e., of the discretization
in time is significantly 
smaller than of that of the discretizing in space (c.f.~\eqref{KM}, \eqref{KM-ic}).
We integrated \eqref{cKM} numerically for different values of $\gamma$ and for $n\in \{128, 256\}$.
For each pair $(\gamma, n)$ we repeated the numerical experiment $200$ times and computed 
the mean value of the error of numerical
integration (compared to the exact solution \eqref{TW-solution}). The mean errors $\bar e_{\gamma,128}$
and $\bar e_{\gamma, 256}$ computed for $n=128$ and $n=256$ respectively are used to determine
the convergence rate:
\be\lbl{compute_rate}
\alpha_\gamma={\ln \left(\bar e_{\gamma,128}/\bar e_{\gamma,256}\right)\over \ln 2}.
\ee
The results of this numerical experiment are shown in Figure~\ref{f.rate}b. Our main goal was to verity
the dependence of the convergence rate on sparsity controlled by $\gamma$. The pixel pictures for 
the adjacency matrices of random graphs corresponding to the nonlocal nearest--neighbor coupling
for $n=512$ and different values of $\gamma$ are shown in Figure~\ref{f.sparse}.
The plot in Figure~\ref{f.rate}b shows a clear linear relation between  the exponent $\alpha_\gamma$ 
and $\gamma$. The numerical rates plotted by blue stars are slightly lower the theoretical
rates $(1-\gamma)/2$ plotted in red. Overall numerical rates show a good fit with the analytical estimate.

\begin{figure}
\begin{center}
\textbf{a}\includegraphics[height=1.5in, width=2.0in]{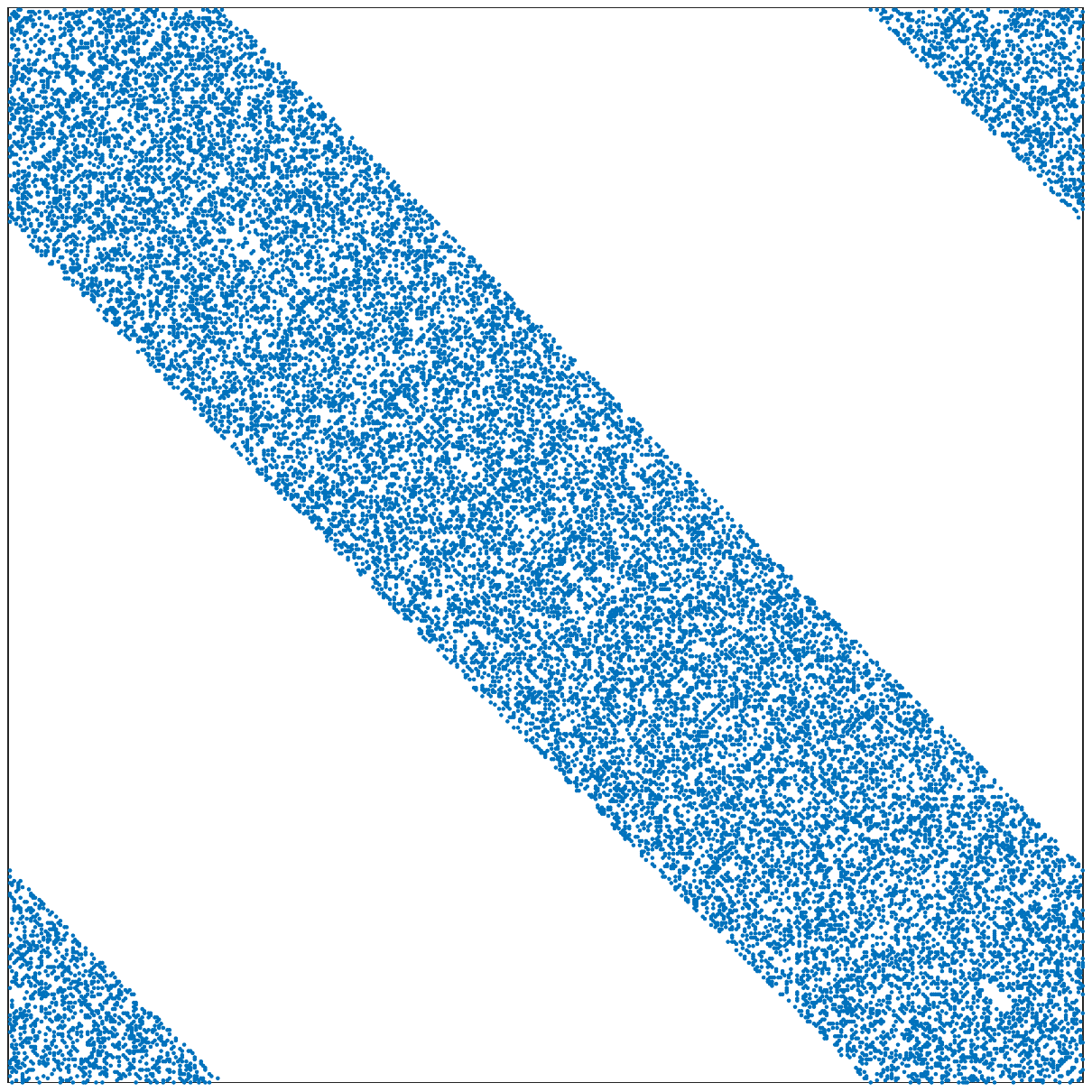} \quad
\textbf{b}\includegraphics[height=1.5in,width=2.0in]{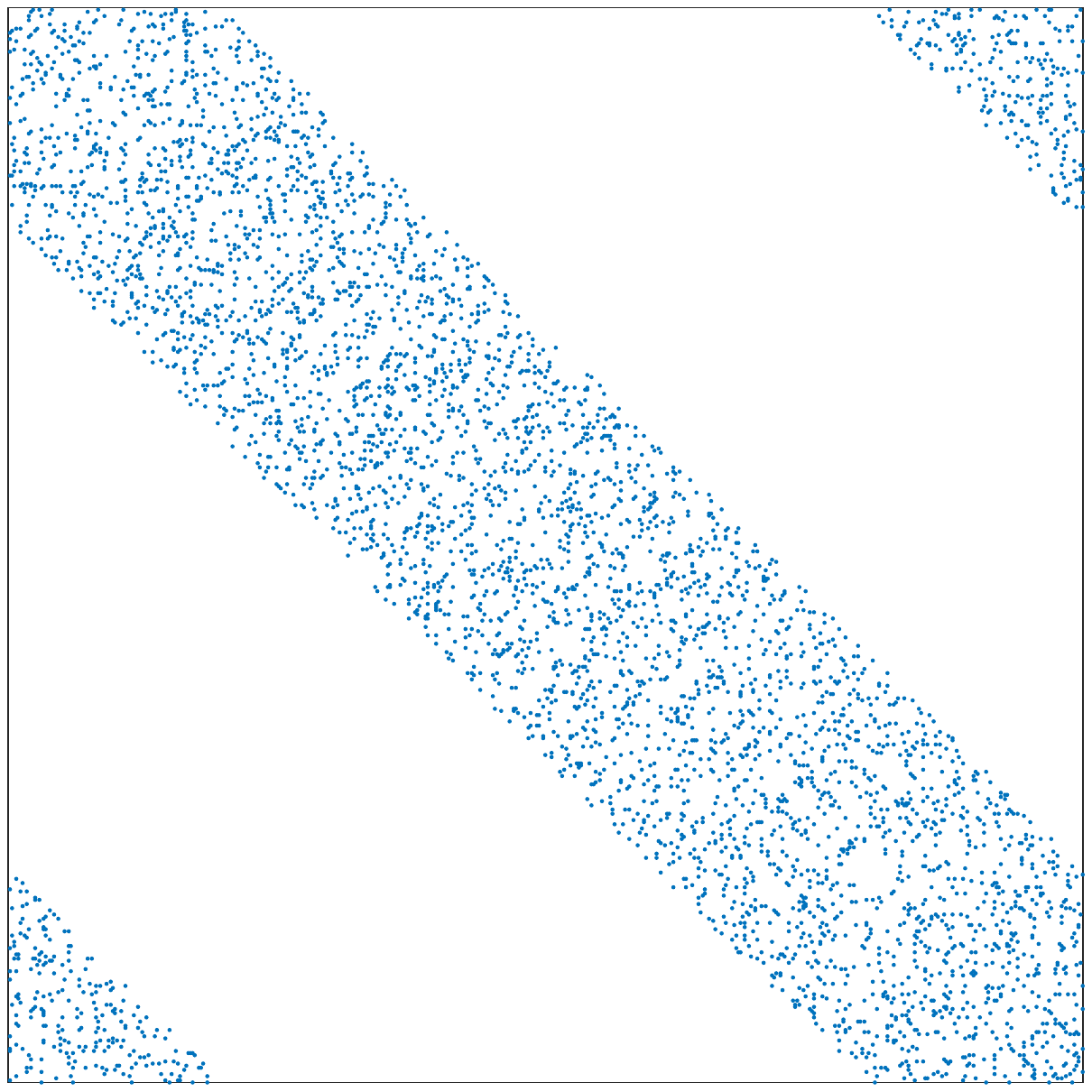} \\
\textbf{c}\includegraphics[height=1.5in, width=2.0in]{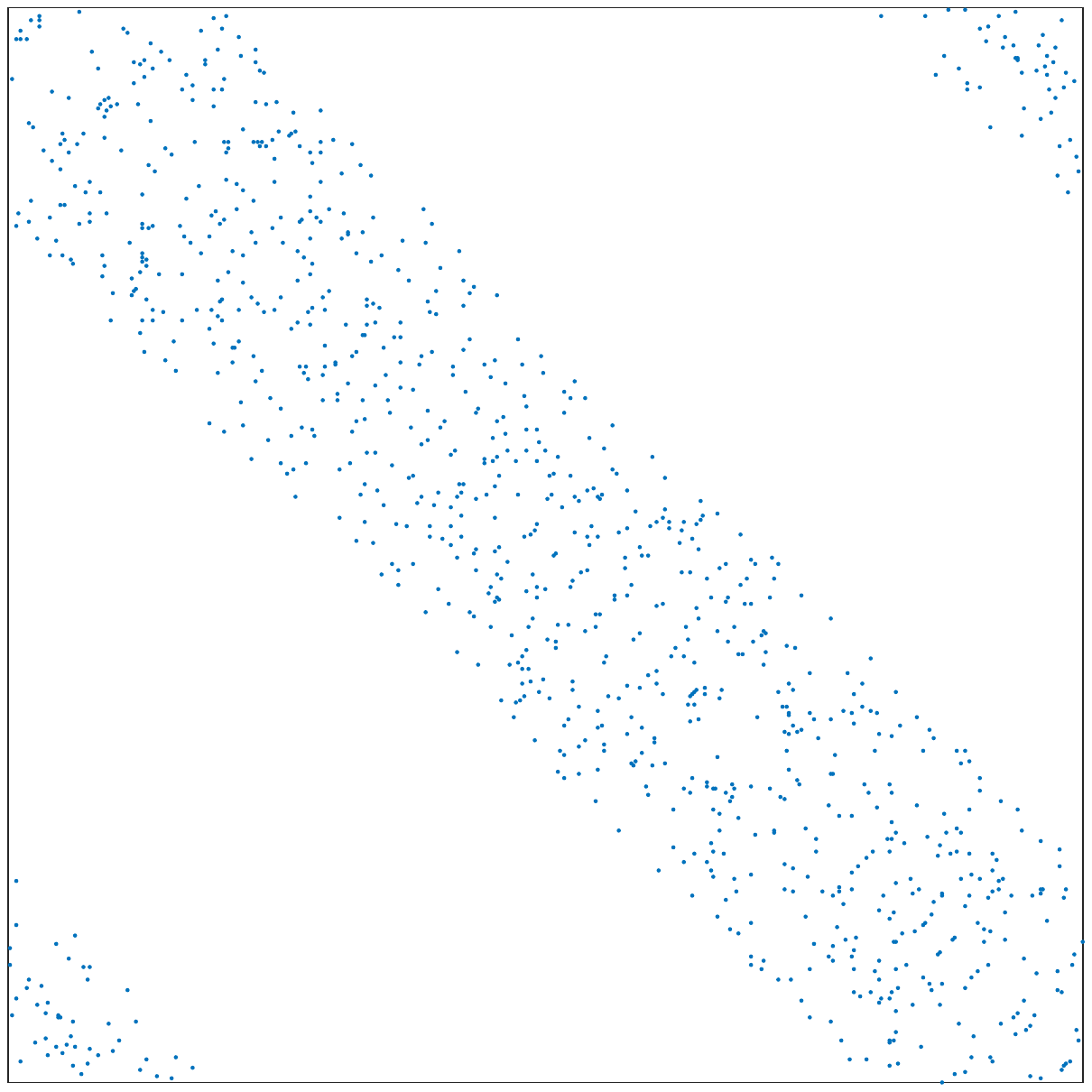} \quad
\textbf{d}\includegraphics[height=1.5in, width=2.0in]{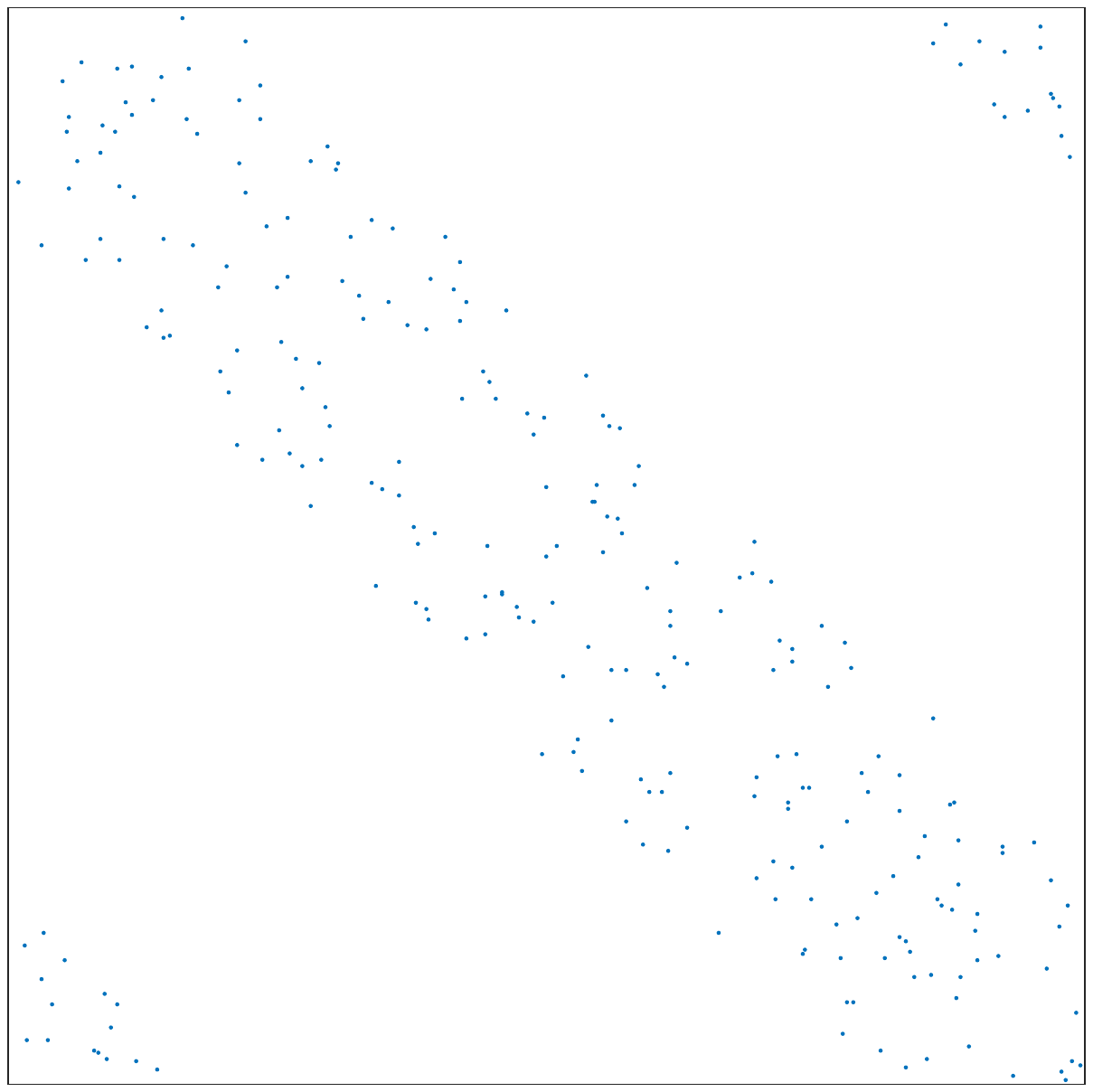}
\end{center}
\caption{Pixel pictures of the adjacency matrices of sparse graphs
  generated with the following values of  $\gamma$: a) $0.25$,
b) $0.5,$ c) $0.75,$ d) $0.95$.
}
\lbl{f.sparse}
\end{figure}

\section{Proof of Lemma~\ref{lem.ave}}\lbl{sec.average}
\setcounter{equation}{0}
In this section, we prove Lemma~\ref{lem.ave}. The proof follows the lines
of the proof of Theorem~4.1 in \cite{Med18}, which 
covers $\gamma\in (0,1/2)$ for $d=1$. Extension to the multidimensional case
$d>1$ is straightforward. Lemmas~\ref{lem.Bern} and \ref{lem.exp}
adapted from \cite{CDG18} allow to extend the range of $\gamma$ to $(0,1)$.
The reader not interested in the extended range of $\gamma$ may find
a simpler proof in \cite{Med18} easier to follow. For those interested in
the full range of $\gamma,$ below we present the following proof 
of Lemma~\ref{lem.ave}.

\begin{thm}\lbl{lem.stronger} 
Let nonnegative $W\in L^4(Q^2)$ satisfy
\be\lbl{W-1s}
\tag{W-1s}
\max\left\{  \esssup_{x\in Q} \int W^k (x,y) dy, \; \esssup_{y\in Q} \int W^k (x,y) dx
\right\} \le \bar W_k,\quad k\in[4]
\ee
and
\be\lbl{alpha}
\liminf_{n\to\infty} {\alpha_n n^d\over \ln n}>0.
\ee 

Then for solutions of  \eqref{KM} and \eqref{aKM} subject to the 
same initial conditions and arbitrary $0<\epsilon<1/2$,
we have
\be\lbl{ave.stronger}
\sup_{t\in [0,T]} \| u_n(t,\cdot) -v_n(t,\cdot)\|_{L^2(Q)} \le 
C (\alpha_n n^d)^{1/2-\epsilon} \quad \mbox{a.s.},
\ee
for arbitrary $T>0$ and positive constant $C$
independent of $n$. 
In particular, for $\alpha_n=n^{-d\gamma},\; \gamma\in (0, 1)$, we have
\be\lbl{ave.stronger}
\sup_{t\in [0,T]} \| u_n(t,\cdot) -v_n(t,\cdot)\|_{L^2(Q)} \le C
n^{-d(1-\gamma-\delta)/2}\quad \mbox{a.s.},
\ee
where $0<\delta<1-\gamma$ can be taken arbitrarily small.
\end{thm}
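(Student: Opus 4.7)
The plan is to run the same energy-plus-Gronwall argument as in Theorem~\ref{thm.main}, the key new ingredient being a concentration bound for the noise introduced when one replaces the averaged kernel $\alpha_n W_{n,\bar i\bar j}$ by the Bernoulli entries $a_{n,\bar i\bar j}$. Setting $w_{n,\bar i}:=u_{n,\bar i}-v_{n,\bar i}$ and subtracting \eqref{aKM} from \eqref{KM}, the difference satisfies
\[
\dot w_{n,\bar i} = \bigl[f_{n,\bar i}(u_{n,\bar i},t) - f_{n,\bar i}(v_{n,\bar i},t)\bigr]
+\frac{1}{\alpha_n n^d}\sum_{\bar j\in\nd}a_{n,\bar i\bar j}\bigl[D(u_{n,\bar j}-u_{n,\bar i}) - D(v_{n,\bar j}-v_{n,\bar i})\bigr]
+\xi_{n,\bar i}(t),
\]
with the stochastic forcing
\[
\xi_{n,\bar i}(t):=\sum_{\bar j\in\nd}\Bigl[\tfrac{a_{n,\bar i\bar j}}{\alpha_n n^d}-\tfrac{W_{n,\bar i\bar j}}{n^d}\Bigr]\,D\bigl(v_{n,\bar j}(t)-v_{n,\bar i}(t)\bigr).
\]
Multiplying by $w_{n,\bar i}$, summing and dividing by $n^d$, and bounding the Lipschitz terms with \eqref{Lip}, \eqref{bound-D} together with a standard first-moment concentration of the random row sums $(\alpha_n n^d)^{-1}\sum_{\bar j}a_{n,\bar i\bar j}$ around their deterministic counterparts $n^{-d}\sum_{\bar j}W_{n,\bar i\bar j}\le\bar W_1$, one arrives a.s.\ at
\[
\tfrac{d}{dt}\|w_n(t)\|_{L^2(Q)}^2\le L\,\|w_n(t)\|_{L^2(Q)}^2 + \|\xi_n(t)\|_{L^2(Q)}^2.
\]
Since $w_n(0)=0$, Gronwall reduces the theorem to proving $\sup_{t\in[0,T]}\|\xi_n(t)\|_{L^2(Q)}\le C(\alpha_n n^d)^{-1/2+\epsilon}$ almost surely.

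For each fixed $(t,\bar i)$, $\xi_{n,\bar i}(t)$ is a sum over $\bar j$ of independent centered random variables, each of size at most $(\alpha_n n^d)^{-1}$; using \eqref{W-1s} and $\sum_{\bar j}W_{n,\bar i\bar j}\le n^d\bar W_1$, the total variance is at most $\bar W_1/(\alpha_n n^d)$. The Bernstein-type bound of Lemma~\ref{lem.Bern} (adapted from \cite{CDG18}) then gives
\[
\mathbb{P}\bigl(|\xi_{n,\bar i}(t)|\ge\tau\bigr)\le 2\exp\bigl(-c\,\tau^2\alpha_n n^d\bigr)\qquad\text{for }\tau=O(1),
\]
so that with $\tau:=C(\alpha_n n^d)^{-1/2+\epsilon}$ the pointwise failure probability is $\exp\bigl(-c(\alpha_n n^d)^{2\epsilon}\bigr)$.

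To lift this to a uniform bound in $(t,\bar i)$, I would exploit that the averaged system \eqref{aKM} has a right-hand side bounded by $\|f\|_\infty+\bar W_1$, so each $v_{n,\bar i}(\cdot)$ is Lipschitz in $t$ uniformly in $n,\bar i$; on the high-probability event that the row sums of $a$ are of their expected order, $t\mapsto\xi_{n,\bar i}(t)$ is itself $O(1)$-Lipschitz. Discretizing $[0,T]$ on a mesh of spacing $n^{-Cd}$ and taking a union bound over this mesh and over $\bar i\in\nd$ (at most $n^{C'd}$ points), the failure probability is at most $n^{C'd}\exp\bigl(-c(\alpha_n n^d)^{2\epsilon}\bigr)$. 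Under \eqref{alpha} (and, for $\alpha_n=n^{-d\gamma}$, $\alpha_n n^d=n^{d(1-\gamma)}$ is polynomial in $n$), this is summable in $n$, so Borel-Cantelli yields $\max_{\bar i,t}|\xi_{n,\bar i}(t)|\le C(\alpha_n n^d)^{-1/2+\epsilon}$ a.s., and since $\|\xi_n(t)\|_{L^2(Q)}^2=n^{-d}\sum_{\bar i}\xi_{n,\bar i}(t)^2\le\max_{\bar i}\xi_{n,\bar i}(t)^2$, the required estimate follows.

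The main obstacle is the sparse regime $\gamma\in(1/2,1)$: there $\alpha_n n^d$ is only polynomial, and a naive second-moment/Chebyshev bound on $\xi_{n,\bar i}(t)$ would be too weak to survive the union bound over the $n^d$ vertices. This is precisely where the sharper exponential-moment inequalities (Lemmas~\ref{lem.Bern}, \ref{lem.exp} from \cite{CDG18}), applied via the fourth-moment hypothesis \eqref{W-1s}, replace the $L^2$-based argument of \cite[Theorem~4.1]{Med18} and extend the admissible range to $\gamma\in(0,1)$. Specializing $\alpha_n=n^{-d\gamma}$ and setting $\delta:=2(1-\gamma)\epsilon\in(0,1-\gamma)$ then converts the bound $(\alpha_n n^d)^{-1/2+\epsilon}$ into the quantitative form $n^{-d(1-\gamma-\delta)/2}$ claimed in \eqref{ave.stronger}.
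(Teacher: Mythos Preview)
Your energy-plus-Gronwall setup is exactly the paper's: the same decomposition into the Lipschitz part, the random-row-sum part (controlled via Lemma~\ref{lem.Bern}), and the centered noise $\xi_n$ (the paper's $\alpha_n^{-1}Z_n$). Where you diverge is in how you control the noise. You propose pointwise Bernstein on each $\xi_{n,\bar i}(t)$, then a union bound over a time mesh and over $\bar i\in\nd$. The paper instead observes that Gronwall only needs the \emph{time-integrated} quantity $\int_0^T e^{-Ls}\|Z_n(s)\|^2_{2,n^d}\,ds$; since $v_n$ is deterministic, this integral is a single quadratic form $n^{-3d}\sum_{\bar i,\bar k,\bar l}c_{n,\bar i\bar k\bar l}\,\eta_{n,\bar i\bar k}\eta_{n,\bar i\bar l}$ with deterministic coefficients, to which one applies an exponential Markov inequality (Lemma~\ref{lem.exp}), factoring the moment generating function over $\bar i$ by independence. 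No time discretization, no union bound over vertices.

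This difference matters for the borderline sparsity in \eqref{alpha}. Your union bound costs a factor $n^{C'd}$, while Bernstein at level $\tau=(\alpha_n n^d)^{-1/2+\epsilon}$ gives tails of size $\exp\bigl(-c(\alpha_n n^d)^{2\epsilon}\bigr)$. When $\alpha_n n^d$ is only comparable to $\ln n$, one has $(\alpha_n n^d)^{2\epsilon}=O\bigl((\ln n)^{2\epsilon}\bigr)=o(\ln n)$ for any $\epsilon<1/2$, so $n^{C'd}\exp\bigl(-c(\alpha_n n^d)^{2\epsilon}\bigr)$ is \emph{not} summable and Borel--Cantelli fails. Your argument therefore proves the ``in particular'' clause (where $\alpha_n n^d=n^{d(1-\gamma)}$ is polynomial and the exponential crushes the prefactor) but not the general statement under \eqref{alpha}. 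The paper's approach survives the logarithmic regime because it never pays a union-bound price over vertices: the product over $\bar i$ of $\bigl(1+O((\alpha_n n^d)^{-1})\bigr)$ contributes only $\exp(O(\alpha_n^{-1}))$, which is beaten by the $\exp(-n^d\delta_n)$ coming from the exponential Markov step with $\delta_n=(\ln n)^{-1/2}$. The passage from the average $\|\xi_n\|_{L^2}^2$ to $\max_{\bar i}\xi_{n,\bar i}^2$ in your last line is also where you throw away the factor that would have saved you.
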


We precede the proof of Theorem~\ref{lem.stronger} with several auxiliary estimates. 
\begin{lem}\lbl{lem.simple}
From \eqref{W-1s} it follows 
\be\lbl{barW}
\max \left\{ \sup_{n\in\N} \max_{\bi\in\nd} n^{-d} \sum_{\bj\in\nd} W^k_{n,\bi\bj},
\;  \sup_{n\in\N} \max_{\bj\in\nd} n^{-d} \sum_{\bi\in\nd} W^k_{n,\bi\bj}
\right\}\le \bar W_k, \quad k\in [4].
\ee
\end{lem}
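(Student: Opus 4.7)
The plan is to show both inequalities simultaneously by symmetry, with the key tool being Jensen's inequality applied to the convex map $t \mapsto t^k$ on $[0,\infty)$ for $k \in [4]$. First I would rewrite each block average as a true mean over $Q_{n,\bi}\times Q_{n,\bj}$, namely
$$W_{n,\bi\bj} = \fint_{Q_{n,\bi}\times Q_{n,\bj}} \tilde W_n(x,y)\, dx\, dy,$$
where the bounded case (I) is covered by the observation that $\tilde W_n$ can be replaced by $W$ itself and, crucially, $\tilde W_n(x,y)\le W(x,y)$ pointwise in both cases, so bounds for $W$ transfer to $\tilde W_n$.

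Next I would apply Jensen's inequality to obtain
$$W_{n,\bi\bj}^k \le \fint_{Q_{n,\bi}\times Q_{n,\bj}} \tilde W_n^k(x,y)\, dx\, dy \le n^{2d}\int_{Q_{n,\bi}\times Q_{n,\bj}} W^k(x,y)\, dx\, dy.$$
Summing over $\bj\in\nd$ and using that the cubes $\{Q_{n,\bj}\}_{\bj\in\nd}$ partition $Q$, the inner integral telescopes via Fubini into
$$n^{-d}\sum_{\bj\in\nd} W_{n,\bi\bj}^k \le n^{d}\int_{Q_{n,\bi}}\left(\int_Q W^k(x,y)\, dy\right) dx.$$

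At this point I would invoke assumption \eqref{W-1s} directly: the inner integral is bounded by $\bar W_k$ for a.e.\ $x$, so
$$n^{-d}\sum_{\bj\in\nd} W_{n,\bi\bj}^k \le n^d\cdot \bar W_k\cdot |Q_{n,\bi}| = \bar W_k,$$
since $|Q_{n,\bi}|=n^{-d}$. Taking the maximum over $\bi\in\nd$ and the supremum over $n$ yields the first bound in \eqref{barW}. The second bound is obtained by the completely symmetric argument, swapping the roles of $x$ and $y$ and invoking the second part of the $\esssup$ in \eqref{W-1s}.

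There is no real obstacle here — the argument is routine — but I would take care to state the monotonicity $\tilde W_n\le W$ explicitly so that the same proof uniformly handles case (I) (where $W_{n,\bi\bj}$ is an average of $W$) and case (II) (where it is an average of the truncation $\tilde W_n$); this is the only place where an inattentive reader might miss that the hypothesis \eqref{W-1s} on $W$ alone suffices.
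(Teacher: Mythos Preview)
Your proposal is correct and is essentially identical to the paper's own proof: both apply Jensen's inequality to the convex map $t\mapsto t^k$ to pass from $W_{n,\bi\bj}^k$ to the block average of $\tilde W_n^k$, use the pointwise bound $\tilde W_n\le W$, sum over $\bj$ via Fubini, and invoke \eqref{W-1s}; the symmetric argument handles the second supremum. Your write-up is in fact a bit cleaner, since the paper's display \eqref{discrete-bound} carries an $n^d$ where $n^{2d}$ is meant in the intermediate lines (the final bound $n^d\bar W_k$ is nonetheless correct).
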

\begin{proof} We prove \eqref{barW} assuming
 that nonnegative $W$ is in $L^2(Q^2),$ but not in $L^\infty(Q^2)$. 
In this case, $W_{n,\bi\bj}$ are defined by \eqref{cutoff}.
For arbitrary $\bar k\in \nd$ and $n\in\N,$ we have 
\begin{equation}\lbl{discrete-bound}
\begin{split}
\sum_{\j\in\nd} W^k_{n,\bi,\bj} & = \sum_{ \bar j \in \nd} \left( n^d\int_{Q_{n,\bi}\times Q_{n,\bj}} \alpha_n^{-1}
\wedge W(x,y) dx dy \right)^k \\
& \le  \sum_{ \bar j \in \nd} n^d \int_{Q_{n,\bi}\times Q_{n,\bj}} \left(\alpha_n^{-k} \wedge W(x,y)^k \right) dx dy\\
& \le n^d \sum_{ \bar j \in \nd} \int_{Q_{n,\bi}\times Q_{n,\bj}}  W(x,y)^k  dx dy \\
&\le n^d \bar W_k,
\end{split}
\end{equation}
where we used Jensen's inequality in the second line and \eqref{W-1s} in the last line.
Thus,
$$
\sup_{n\in\N} \max_{\bi\in\nd} n^{-d} \sum_{\bj\in\nd} W^k_{n,\bi\bj} \le \bar W_k, \quad k\in [k].
$$
The bound for  $\sup_{n\in\N} \max_{\bj\in\nd} n^{-d} \sum_{\bi\in\nd} W^k_{n,\bi\bj}$ is proved similarly.
\end{proof}

\begin{lem}\lbl{lem.Bern} For $K\ge 2\bar W_1$, we have
\be\lbl{Bern}
\P\left( \max_{\bi\in\nd} \sum_{\bj \in \nd} \left| {a_{n,\bi\bj}\over \alpha_n} -W_{n,\bi\bj}\right| \ge 
Kn^d\right)
\le n^d \exp\left\{  
{ 
{-1\over 2} 
\left(K- 2\bar W_1 \right)^2 \alpha n^d 
\over
\bar W_1 +O(\alpha_n) + K } \right\}.
\ee
In particular, with probability $1$ there exists $n_0\in \N$ such that 
\be\lbl{Bern-implies}
\max\left\{
\max_{\bi\in \nd} \sum_{\bj\in\nd} \left| {a_{n,\bi\bj}\over \alpha_n}-W_{n,\bi\bj}\right|,
\max_{\bi\in \nd} \sum_{\bj\in\nd} \left| {a_{n,\bj\bi}\over \alpha_n}-W_{n,\bj\bi}\right|
\right\} \le Kn^d
\ee
for all $n\ge n_0$.
\end{lem}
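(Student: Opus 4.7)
The plan is to prove \eqref{Bern} by a standard Bernstein-plus-union-bound argument applied to the centered absolute deviations $|a_{n,\bi\bj}/\alpha_n - W_{n,\bi\bj}|$, and to obtain \eqref{Bern-implies} by Borel--Cantelli using hypothesis \eqref{alpha}.

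Fix $\bi\in\nd$ and set $Z_{\bj}:=a_{n,\bi\bj}/\alpha_n - W_{n,\bi\bj}$, $\bj\in\nd$. Since $\P(a_{n,\bi\bj}=1)=\alpha_n W_{n,\bi\bj}$ and the truncation in \eqref{cutoff} forces $W_{n,\bi\bj}\le \alpha_n^{-1}$, the random variable $|Z_{\bj}|$ takes the value $\alpha_n^{-1}-W_{n,\bi\bj}\ge 0$ with probability $\alpha_n W_{n,\bi\bj}$ and the value $W_{n,\bi\bj}$ with probability $1-\alpha_n W_{n,\bi\bj}$. A direct computation then gives
$$
\E|Z_{\bj}|=2W_{n,\bi\bj}\bigl(1-\alpha_n W_{n,\bi\bj}\bigr)\le 2W_{n,\bi\bj},\qquad |Z_{\bj}|\le \alpha_n^{-1},
$$
and, since $\E Z_{\bj}=0$, $\mathrm{Var}(|Z_{\bj}|)\le\mathrm{Var}(Z_{\bj})=W_{n,\bi\bj}/\alpha_n-W_{n,\bi\bj}^2$. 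Summing over $\bj$ and invoking Lemma~\ref{lem.simple},
$$
\mu_{\bi}:=\E\sum_{\bj}|Z_{\bj}|\le 2\bar W_1 n^d,\qquad \sigma_{\bi}^{2}:=\sum_{\bj}\mathrm{Var}(|Z_{\bj}|)\le \bar W_1 n^d/\alpha_n.
$$

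Next, put $U_{\bj}:=|Z_{\bj}|-\E|Z_{\bj}|$: these are independent, centered, and bounded in absolute value by $M:=\alpha_n^{-1}$. When $K\ge 2\bar W_1$, the event $\{\sum_{\bj}|Z_{\bj}|\ge Kn^d\}$ is contained in $\{\sum_{\bj}U_{\bj}\ge (K-2\bar W_1)n^d\}$. The classical Bernstein inequality applied with $t=(K-2\bar W_1)n^d$, $\sigma^2=\sigma_{\bi}^{2}$, and $M=\alpha_n^{-1}$ yields
$$
\P\!\left(\sum_{\bj}|Z_{\bj}|\ge Kn^d\right)\le \exp\!\left\{-\frac{(K-2\bar W_1)^2 n^{2d}/2}{\sigma_{\bi}^{2}+Mt/3}\right\}\le \exp\!\left\{-\frac{(K-2\bar W_1)^2\alpha_n n^d/2}{\bar W_1+(K-2\bar W_1)/3}\right\},
$$
which is precisely the form of the right-hand side of \eqref{Bern}; the $O(\alpha_n)$ term in the denominator reflects the $-\alpha_n W_{n,\bi\bj}^2$ corrections dropped in the bounds for $\E|Z_{\bj}|$ and $\mathrm{Var}(Z_{\bj})$. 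A union bound over the $n^d$ values of $\bi$ gives \eqref{Bern}, and exactly the same argument with the roles of $\bi$ and $\bj$ interchanged (using the symmetric column-sum bound in Lemma~\ref{lem.simple}) handles the second term in \eqref{Bern-implies}. For the almost-sure statement, hypothesis \eqref{alpha} gives $\alpha_n n^d\gtrsim \ln n$, so for $K$ fixed (taken large enough if necessary) the right-hand side of \eqref{Bern} is dominated by $n^{-c}$ with $c>1$; the sum over $n$ converges and Borel--Cantelli shows that each event in \eqref{Bern-implies} occurs only finitely often a.s., yielding the random $n_0$.

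I expect the only delicate point to be the bookkeeping that produces the exact form of the denominator in \eqref{Bern}: the absolute value inside the sum prevents a direct application of Bernstein to the $Z_{\bj}$, so one must center $|Z_{\bj}|$ by hand and then exploit the two facts that make the argument go through, namely $\mathrm{Var}(|Z_{\bj}|)\le \mathrm{Var}(Z_{\bj})$ (valid because $\E Z_{\bj}=0$) and $\E|Z_{\bj}|\le 2W_{n,\bi\bj}$. With these two ingredients the rest of the proof is a textbook Bernstein--Borel--Cantelli pattern.
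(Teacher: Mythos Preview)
Your argument is correct and follows essentially the same route as the paper: center the absolute deviations $|a_{n,\bi\bj}/\alpha_n - W_{n,\bi\bj}|$ (the paper writes your $U_{\bj}$ as $\xi_{n,\bi\bj}$), apply Bernstein's inequality, take a union bound over $\bi\in\nd$, and conclude \eqref{Bern-implies} via Borel--Cantelli. Your use of the elementary inequality $\mathrm{Var}(|Z_{\bj}|)\le \mathrm{Var}(Z_{\bj})$ (since $\E Z_{\bj}=0$) slightly streamlines the variance bookkeeping compared to the paper's direct expansion of $\E\xi_{n,\bi\bj}^2$, and your bound $|U_{\bj}|\le\alpha_n^{-1}$ is a touch sharper than the paper's $M=3\alpha_n^{-1}$, but neither changes the structure of the proof.
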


For the next lemma, we will need the following notation
\begin{eqnarray}\lbl{Zn}
Z_{n,\bi}(t)   &=& n^{-d}\sum_{\bj\in\nd} b_{n,\bi\bj}(t)\eta_{n,\bi\bj},\\
\lbl{bn}
b_{n,\bi\bj}(t) &=& D\left(v_{n,\bj}(t)-v_{n,\bi}(t)\right),\\
\lbl{eta}
\eta_{n,\bi\bj} &=&a_{n,\bi\bj} -\alpha_n  W_{n,\bi\bj},
\end{eqnarray}
and $Z_n=(Z_{n,\bi},\; \bi\in\nd)$.
\begin{lem}\lbl{lem.exp} For arbitrary $\epsilon>0$, we have
\be\lbl{exp}
\alpha_{n}^{-2} \int_0^\infty e^{-Ls} \|Z_n (s)\|_{2,n^d}^2 ds \le C(\alpha_n n^d)^{1-\epsilon},
\ee
where $C$ is a positive constant independent of $n$ and
\be\lbl{dL2}
\|Z_n (s)\|_{2,n^d}=\left( n^{-1} \sum_{\bj\in\nd} Z_{n,\bj}(s)^2 \right)^{1/2}.
\ee
\end{lem}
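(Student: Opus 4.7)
The plan is to exploit the fact that $v_n(t)$ solves the averaged equation \eqref{aKM}, whose coefficients $W_{n,\bi\bj}$ are deterministic; hence $v_n$ and therefore $b_{n,\bi\bj}(t) = D(v_{n,\bj}(t)-v_{n,\bi}(t))$ are non-random functions of $t$, bounded in absolute value by $1$ via \eqref{bound-D}. All randomness in $Z_{n,\bi}(t)$ therefore resides in the centered variables $\eta_{n,\bi\bj} = a_{n,\bi\bj} - \alpha_n W_{n,\bi\bj}$, which are independent across the pair index. The core idea is to bound $|Z_{n,\bi}(t)|$ uniformly in $\bi \in \nd$ and $t \ge 0$ using the Bernstein-type Lemma~\ref{lem.Bern}.

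First I would apply \eqref{Bern-implies} with $K$ chosen large enough to make the tail probability $n^d \exp(-c\alpha_n n^d)$ summable in $n$; under hypothesis \eqref{alpha}, which asserts $\alpha_n n^d \gtrsim \ln n$, such a $K$ exists. Borel--Cantelli then yields that almost surely there is an $n_0$ so that for all $n \ge n_0$,
$$
\max_{\bi \in \nd} \sum_{\bj \in \nd} |\eta_{n,\bi\bj}| \le K \alpha_n n^d.
$$
Combining this with $|b_{n,\bi\bj}(t)| \le 1$ gives
$$
|Z_{n,\bi}(t)| \le n^{-d}\sum_{\bj}|b_{n,\bi\bj}(t)|\,|\eta_{n,\bi\bj}| \le K \alpha_n
$$
uniformly in $(\bi,t)$, whence $\|Z_n(s)\|_{2,n^d}^2 \le K^2\alpha_n^2$ almost surely and
$$
\alpha_n^{-2} \int_0^\infty e^{-Ls}\|Z_n(s)\|_{2,n^d}^2\,ds \le K^2/L.
$$
This constant bound is, in particular, dominated by $C(\alpha_n n^d)^{1-\epsilon}$ once $\alpha_n n^d$ is sufficiently large; absorbing finitely many small values of $n$ into $C$ closes the argument.

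The main obstacle is securing uniformity in $t$ of the a.s.\ estimate on $Z_{n,\bi}(t)$. The strategy above resolves this elegantly by placing the concentration step on the $t$-independent quantity $\sum_{\bj}|\eta_{n,\bi\bj}|$ and only afterwards inserting the deterministic pointwise bound on $b_{n,\bi\bj}$. A sharper alternative would apply Bernstein directly to $Z_{n,\bi}(t)$ and use Lipschitz regularity of $v_n$ in $t$ (via an $\epsilon$-net in $[0,T]$ and extension by the tail $e^{-Ls}$) to obtain $|Z_{n,\bi}(t)| \lesssim \sqrt{\alpha_n \ln n / n^d}$, yielding a bound of order $\ln n/(\alpha_n n^d)$, well within the claimed range. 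Either route relies crucially on \eqref{alpha}: the Bernstein tails become summable exactly under this assumption, which is what opens up the full sparsity range $\gamma \in (0,1)$ as opposed to the $\gamma \in (0,1/2)$ threshold accessible by the second-moment method used in \cite{Med18}.
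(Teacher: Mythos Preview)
Your argument is technically correct for the statement \emph{as printed}, but the printed exponent is almost certainly a sign error: compare how the lemma is invoked in \eqref{Gron} and the final rate \eqref{ave.stronger}. The bound actually needed to prove Lemma~\ref{lem.ave} is
\[
\alpha_n^{-2}\int_0^\infty e^{-Ls}\|Z_n(s)\|_{2,n^d}^2\,ds \le C(\alpha_n n^d)^{-(1-\epsilon)},
\]
which decays. Your primary route, bounding $|Z_{n,\bi}(t)|\le n^{-d}\sum_{\bj}|\eta_{n,\bi\bj}|$, discards the mean-zero structure of the $\eta_{n,\bi\bj}$ entirely and can never do better than $O(1)$; plugging that into \eqref{pre-G-1}--\eqref{Gron} gives $\|\psi_n\|=O(1)$ and no convergence at all. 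So while you have ``closed'' the literal statement, you have not established the estimate that makes the lemma useful.

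The paper's proof is genuinely different from both of your sketches. It first integrates out time using \eqref{Zni2-2} to reduce the problem to controlling the quadratic form
\[
(n^d\alpha_n)^{-2}\sum_{\bi,\bk,\bl\in\nd} c_{n,\bi\bk\bl}\,\eta_{n,\bi\bk}\eta_{n,\bi\bl},
\]
with deterministic bounded coefficients $c_{n,\bi\bk\bl}=\int_0^\infty e^{-Ls}b_{n,\bi\bk}(s)b_{n,\bi\bl}(s)\,ds$. This eliminates the uniformity-in-$t$ issue outright. Concentration is then obtained by an exponential Markov inequality on the event $A_n^c$ of Lemma~\ref{lem.Bern}: one bounds $\E\big[\1_{A_n^c}\exp\{\cdot\}\big]$ via the product structure over $\bi$, the elementary inequality $e^x\le 1+|x|e^{|x|}$, Cauchy--Schwarz, and second/fourth moment estimates for $\sum_{\bk,\bl}c_{n,\bi\bk\bl}\eta_{n,\bi\bk}\eta_{n,\bi\bl}$ using independence and \eqref{W-1s}. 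The moment computation is where the cancellation is harvested --- off-diagonal terms vanish by $\E\eta=0$, and the surviving diagonal contributions are $O((\alpha_n n^d)^{-2})$. Borel--Cantelli with a threshold $\delta_n\searrow 0$ satisfying $n^d\delta_n\gg\alpha_n^{-1}$ (take $\delta_n=(\alpha_n n^d)^{-1+\epsilon}$) then delivers the decaying bound.

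Your ``sharper alternative'' via pointwise Bernstein on $Z_{n,\bi}(t)$ plus an $\epsilon$-net in $t$ is in the right spirit and would likely succeed, but you would have to control the Lipschitz constant of $t\mapsto b_{n,\bi\bj}(t)$ uniformly in $n$ and handle the unbounded interval $[0,\infty)$ via the weight $e^{-Ls}$; the paper's quadratic-form route sidesteps both complications.
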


\begin{proof}[Proof of Theorem~\ref{lem.stronger}]
Recall that $f(u,x,t)$ and $D$ are Lipschitz continuous function in $u$ with Lipschitz 
constants $L_f$ and $L_D$ respectively.
 
Further,  $a_{n,ij},$ are Bernoulli random variables
\be\lbl{Bern}
\P(a_{n,\bi\bj}=1)=\alpha_n W_{n,\bi\bj}.
\ee

Denote $\psi_{n,\bi}:=v_{n,\bi}- u_{n,\bi}.$ 
By subtracting (\ref{KM}) from (\ref{aKM}), 
multiplying the result by $n^{-d}\psi_{n,\bi},$ and summing over $\bi\in\nd$, we obtain
\be \lbl{subtract}
\begin{split}
{1\over 2} {d\over dt} \|\psi_n\|^2_{2,n^d} &= \underbrace{
N^{-1}\sum_{\bi\in\nd} \left(f(v_{n,\bi},t)-f(u_{n,\bi},t)\right) \psi_{n,i}}_{I_1}+
\underbrace{ n^{-2d} \alpha_n^{-1}\sum_{\bi,\bj\in\nd}
\left(\alpha_n W_{n,\bi\bj}-a_{n,\bi\bj}\right)
D(v_{n,\bj}-v_{n,\bi}) \psi_{n,\bi}}_{I_2}\\
&+
\underbrace{N^{-2} \alpha_n^{-1} \sum_{\bi,\bj\in \nd}^n a_{n,\bi\bj} 
\left[ D(v_{n,\bj}-v_{n,\bi}) - D(u_{n,\bj}-u_{n,\bi})\right]\psi_{n,\bi}}_{I_3}=:I_1+I_2+I_3,
\end{split}
\ee
where $\| \cdot\|^2_{2,n^d}$ is the discrete $L^2$-norm (cf.~\eqref{dL2}).

Using Lipschitz continuity of $f$ in $u$, we  have
\be\lbl{I_1}
|I_1|\le L_f \|\psi_{n}\|_{2,n^d}^2.
\ee
Using Lipschitz continuity of $D$ and the triangle inequality, we 
have
\be\lbl{I_3}
\begin{split}
|I_3|& \le L_D n^{-2d} \alpha_n^{-1} \sum_{\bi,\bj\in \nd} a_{n,\bi\bj} 
\left( |\psi_{n,\bi}|+|\psi_{n,\bj}|\right) |\psi_{n,\bi}|\\
& \le L_D n^{-2d} \alpha_n^{-1} 
\left({3\over 2} \sum_{\bi,\bj\in\nd}  a_{n,\bi,\bj} \psi_{n,\bi}^2 +
{1\over 2} \sum_{\bi,\bj\in\nd}  a_{n,\bi\bj} \psi_{n,\bj}^2
\right).
\end{split}
\ee

Using Lemma~\ref{lem.Bern} and \eqref{barW}, we obtain
\be\lbl{uh-1}
\begin{split}
\alpha_n n^{-2d} \sum_{\bi,\bj\in\nd} a_{n,\bi\bj} \psi_{n,\bi}^2 & \le
n^{-d}\sum_{\bi\in\nd} 
\left\{ n^{-d}\sum_{\bj\in \nd} \left(\left| {a_{n,\bi\bj}\over \alpha_n} -W_{n,\bi\bj}\right|
+W_{n,\bi\bj}\right) \psi_{n,\bi}^2 \right\}\\
&\le n^{-d}\sum_{\bi\in\nd} \left(K+\bar W_1\right)\psi_{n,\bi}^2=  
\left(K+W_1\right) \|\psi_n\|_{2,n^d}^2.
\end{split}
\ee
Similarly,
\be\lbl{uh-2}
n^{-2d} \alpha_n^{-1} \sum_{\bi,\bj\in\nd} a_{n,\bi\bj\in\nd} \psi_{n,\bj}^2\le
\left(K+\bar W_2\right)\|\psi_n\|_{2,n^d}^2.
\ee

By plugging \eqref{uh-1} and \eqref{uh-2} into \eqref{I_3}, we have
\be\lbl{finish-I_3}
|I_3| \le L_D \left( 2K +{3\over 2} \bar W_1 +{1\over 2} \bar W_2 \right)
\|\psi\|_{n^d,2}^2.
\ee

It remains to bound $I_2$:
\be\lbl{est-I-1}
|I_2|= |n^{-d}\alpha_n^{-1}
\sum_{\bi\in\nd}^n Z_{n,\bi}\psi_{n,\bi}|\le 2^{-1}\alpha_n^{-2} \|Z_n\|_{2,n^d}^2 +
2^{-1} \|\psi_n\|^2_{2,n^d}.
\ee

The combination of  \eqref{subtract}, \eqref{I_1}, \eqref{finish-I_3} and \eqref{est-I-1} yields
\be\lbl{pre-G-1}
{d\over dt} \|\psi_n(t)\|_{2,n}^2 \le L\|\psi_n(t)\|_{2,n}^2 +
{1\over \alpha_n^2} \|Z_n(t)\|_{2,n}^2,
\ee
where $L=L_f+L_D\left(2K +{3\over 2} \bar W_1+{1\over 2} \bar W_2\right)+{1\over 2}.$

Using the Gronwall's inequality and Lemma~\ref{lem.exp}, we have
\be\lbl{Gron}
\begin{split}
\sup_{t\in [0, T]} \|\psi_n(t)\|_{2,n^d}^2 & \le \alpha_n^{-2} e^{ L T}
\int_0^\infty e^{-Ls}\|Z_n(s)\|_{2,n^d}^2 ds\\
&
\le \alpha_n^{-2} e^{ L T} (n^d\alpha_n)^{-1+\epsilon}.
\end{split}
\ee
\end{proof}

\begin{proof}[Proof of Lemma~\ref{lem.Bern}]
Let 
\be\lbl{xi}
\xi_{n,\bi\bj} =\left| {a_{n,\bi\bj}\over \alpha_n}-W_{n,\bi\bj} \right|-
2 W_{n,\bi\bj} \left(1-\alpha_n W_{n,\bi \bj} \right), \; \bi,\bj\in \nd.
\ee
Note that for fixed $\bi\in\nd,$ $\{\xi_{n,\bi\bj}, \, \bj\in \nd\}$ are mean zero independent RVs. Further,
using the definition of $\xi_{n,\bi\bj},$ it is straightforward to bound 
\begin{eqnarray}
\lbl{bound-M}
|\xi_{n,\bi\bj} | &\le& \alpha_n^{-1}+2 W_{n,\bi\bj}\le 3\alpha^{-1}_n=:M,\\
\lbl{bound-Exi2}
\E \xi_{n,\bj}^2 &\le & 2 \alpha_n^{-1} W_{n,\bi\bj} + 2W_{n,\bi\bj}^2 +4 \alpha_n  
W_{n,\bi\bj}^2+4 \alpha_n  W_{n,\bi\bj}^3.
\end{eqnarray}

From \eqref{bound-Exi2}, we have
\be\lbl{sum-xi2}
\begin{split}
\E\left(\sum_{\bj\in\nd} \xi_{n,\bi\bar j}^2\right) & \le 
\alpha_n^{-1} \sum_{\bar j\in\nd} \left( 
2 W_{n,\bi\bj} + \alpha_n 2W_{n,\bi\bj}^2 +4 \alpha^2_n  W_{n,\bi\bj}^2+4 \alpha^2_n  W_{n,\bi\bj}^3
\right), \\
 & \le \alpha_n^{-1} n^d W_1 +O\left(\alpha_n\right).
\end{split}
\ee

Using Bernstein's inequality and the union bound, we have
\be\lbl{Bernst}
\begin{split}
\P\left(\max_{\bi\in\nd}\sum_{\bj\in\nd} \xi_{n,\bi\bj} \ge \left(K-2\bar W_1\right)n^d 
\right) & \le 
n^d \exp\left\{ 
 {
       {-1\over 2} \left( K- 2\bar W_1 \right)^2 n^{2d}
\over
\sum_{\bj\in} \E\xi^2_{n,\bi\bj} + (1/3)M \left(K-2\bar W_1\right)n^d 
} 
\right\}\\
&\le 
n^d \exp\left\{ 
 {
       {-1\over 2} \left( K- 2\bar W_1 \right)^2 n^{2d}
\over
\alpha_n^{-1} n^d\left(\bar W_1+O(\alpha_n)\right) + \alpha_n^{-1} \left(K-2\bar W_1\right)n^d 
} 
\right\}\\
&\le 
N \exp\left\{  
{ 
{-1\over 2} 
\left(K-2\bar W_1 \right)^2 \alpha_n n^d 
\over
\bar W_1 +O(\alpha_n) + K 
} \right\}.
\end{split}
\ee

Finally, the combination of \eqref{xi} and \eqref{Bernst} yields
\begin{equation*}
\begin{split}
\P\left(\max_{\ni\in\nd}\sum_{\bj\in\nd} 
\left| {a_{n,\bi\bj}\over \alpha_n} -W_{n,\bi\bj}\right|
\ge Kn^d\right)
& \le 
\P\left(\max_{\ni\in\nd}\sum_{\bj\in\nd} \xi_{n,\bi\bj} \ge \left(K-{2\over n^d}
\sum_{\bj\in\nd} W_{n,\bi\bj}\right)n^d 
\right)\\
& \le 
\P\left(\max_{\ni\in\nd}\sum_{\bj\in\nd} \xi_{n,\bi\bj} \ge \left(K-2\bar W_1\right)n^d 
\right)\\
& 
\le 
n^d \exp\left\{  
{ 
{-1\over 2} 
\left(K- 2\bar W_1 \right)^2 \alpha n^d 
\over
\bar W_1 +O(\alpha_n) +K 
} \right\}.
\end{split}
\end{equation*}
This proves \eqref{Bern}. By Borel-Cantelli Lemma, \eqref{Bern-implies} follows.
\end{proof}

\begin{proof}[Proof of Lemma~\ref{lem.exp}]
Recall \eqref{Zn}-\eqref{eta} and rewrite
\be\lbl{Zni2-2}
\int_0^\infty e^{-Ls}  \|Z_{n}(s)\|_{2,n^d}^2 ds= 
n^{-3d} \sum_{\bi,\bk,\bl\in\nd} c_{n,\bi\bk\bl} \eta_{n,\bi\bk}\eta_{n,\bi\bl},
\ee
where
\be\lbl{def-cnjikl}
c_{n,\bi\bk\bl}=\int_0^\infty e^{-Ls} b_{n,\bi\bk}(s) b_{n\bi\bl}(s) ds \quad\mbox{and}\quad 
|c_{n,\bi\bk\bl}|\le L^{-1}=:\bar c.
\ee

By \eqref{alpha}, one can choose a sequence $\delta_n\searrow 0$ such that
\be\lbl{deltaN}
n^d\delta_n \gg \alpha_n^{-1}.
\ee
Specifically, let
\be\lbl{def-delta}
\delta_n:= {1\over \sqrt{\ln n}}.
\ee
and define events
\be\lbl{OmegaN}
\Omega_n= \left\{ (n^d\alpha_n)^{-2} \sum_{\bi,\b k,\b l\in\nd}^n c_{n,\bi\bk\bl} 
\eta_{n,\bi\bk}\eta_{n,\bi\bl} 
>\delta_n n^d   \right\},
\ee
\be\lbl{Ani}
A_{n,\bi}=\left\{ \sum_{\bj\in\nd} 
\left| {a_{n,\bi\bj} \over \alpha_n} -W_{n,\bi\bj}
\right|
>Kn^d\right\},\;\mbox{and}\; A_n=\bigcup_{\bi \in \nd} A_{n,\bi}.
\ee

Clearly,
\be\lbl{clear-bound}
\P( \Omega_n)\le \P(\Omega_n\cap A_n^c) +\P(A_n).
\ee
We want to show that $\P\left(\Omega_n\;\mbox{infinitely often}\right)=0$. By Borel-Cantelli Lemma,
it is sufficient to show that 
$$
\sum_{n\ge 1} \P( \Omega_n)<\infty.
$$
From Lemma~\ref{lem.Bern}, we know that 
$
\sum_{n\ge 1} \P( A_n)<\infty
$
for $K> 2\bar W_1.$ In the remainder of the proof,  we show that 
$\sum_{n\ge 1} \P(\Omega_n\cap A_n^c)$ is convergent.

Applying the exponential Markov inequality to $\P( \Omega_n\left| A_n^c\right. )$,  from 
$\P( \Omega_n\cap A_n^c )\le \P( \Omega_n\left| A_n^c\right. )$ and \eqref{OmegaN},  we have
\be\lbl{POmegan}
\P( \Omega_n\cap A_n^c) \le \exp\left\{ 
-n^d\delta_n  +\ln \E \left[ \1_{A_n^c} \exp\left\{
(n^d\alpha_n)^{-2} 
\sum_{\bi,\b k,\b l\in\nd}^n 
c_{n,\bi\bk\bl} \eta_{n,\bi\bk}\eta_{n,\bi\bl}
\right\} \right]
\right\}.
\ee

Using the independence of $\eta_{n,\bi\bk\bl}$ in $\bi\in \nd$, we have
\be\lbl{ind-in-i}
\E \left[ \1_{A_n^c} \exp\left\{
(n^d\alpha_n)^{-2} 
\sum_{\bi,\b k,\b l\in\nd} 
c_{n,\bi\bk\bl} \eta_{n,\bi\bk}\eta_{n,\bi\bl}
\right\} \right]=\prod_{\bi\in\nd}
\E \left[ \1_{A_n^c} \exp\left\{
(n^d\alpha_n)^{-2} 
\sum_{\b k,\b l\in\nd} 
c_{n,\bi\bk\bl} \eta_{n,\bi\bk}\eta_{n,\bi\bl}.
\right\} \right]
\ee

Using 
$$
e^x\le 1+|x|e^{|x|}, \quad x\in\R,
$$ 
and the Cauchy-Schwartz inequality, we bound the right--hand side of \eqref{ind-in-i}
as follows
\be\lbl{C-S}
\begin{split}
&\E \left[ \1_{A_n^c} \exp\left\{
(n^d\alpha_n)^{-2} 
\sum_{\b k,\b l\in\nd} 
c_{n,\bi\bk\bl} \eta_{n,\bi\bk}\eta_{n,\bi\bl}
\right\} \right] \\
& \le 1+ \E \left[ \1_{A_n^c}
\left|(n^d\alpha_n)^{-2} 
\sum_{\b k,\b l\in\nd} 
c_{n,\bi\bk\bl} \eta_{n,\bi\bk}\eta_{n,\bi\bl}
\right| \exp\left\{ \left|
(n^d\alpha_n)^{-2} 
\sum_{\b k,\b l\in\nd} 
c_{n,\bi\bk\bl} \eta_{n,\bi\bk}\eta_{n,\bi\bl}
\right|
\right\}
\right]\\
&\le 1+\left( \E \left\{(n^d\alpha_n)^{-2} 
\sum_{\b k,\b l\in\nd} 
c_{n,\bi\bk\bl} \eta_{n,\bi\bk}\eta_{n,\bi\bl} \right\}^2 \right)^{1/2}\\
&\times
\left(\E \left\{  \1_{A_n^c} \exp \left\{2(n^d\alpha_n)^{-2} 
\sum_{\b k,\b l\in\nd} 
c_{n,\bi\bk\bl} \eta_{n,\bi\bk}\eta_{n,\bi\bl} \right\}\right\}
\right)^{1/2}.
\end{split}
\ee

From \eqref{eta}, \eqref{def-cnjikl}, and under $A_n^c$ (cf.~\eqref{Ani}), 
we have 
\be\lbl{underAc}
 \1_{A_n^c} 2  (n^d\alpha_n)^{-2} \left|
\sum_{\b k,\b l\in\nd}^n 
c_{n,\bi\bk\bl} \eta_{n,\bi\bk}\eta_{n,\bi\bl} \right| \le 2K\bar c.
\ee
Further,
\be\lbl{moments}
\begin{split}
& \E \left\{(n^d\alpha_n)^{-2} 
\sum_{\b k,\b l\in\nd}
c_{n,\bi\bk\bl} \eta_{n,\bi\bk}\eta_{n,\bi\bl} \right\}^2 \le 
(n^d\alpha_n)^{-4} \sum_{\bj,\bk,\bl,\bar p\in \nd}
\E\left(\eta_{n,\bi\bj}\eta_{n,\bi\bk}\eta_{n,\bi\bl}\eta_{n,\bi\bar p}\right) c_{n,\bi\bj\bar p}
c_{n,\bi\bk\bl} \\
&\le  {(\bar c)^2 \over (n^d\alpha_n)^4}\left\{ \sum_{\bj\in\nd} \E \eta_{n, \bi\bj}^4 +
6\left(\sum_{\bj\in\nd}  \E\eta_{\bi\bj}^2\right)^2\right\}.
\end{split}
\ee

Using \eqref{eta}, we estimate sum of the fourth moments of $\eta_{n,\bi\bj}$
\be\lbl{moments-4}
\begin{split}
\sum_{\bj \in \nd} \E\eta^4_{\bi\bj} & =
\sum_{\bj \in \nd}  \left\{\alpha_nW_{n,\bi\bj} \left( 1-\alpha_n W_{n,\bi\bj} \right)^4
+ \alpha_n^4 W^4_{n,\bi\bj} \left( 1-\alpha_n W_{n,\bi\bj} \right)\right\}\\
& \le n^d\alpha_n \left( N^{-1} \sum_{\bj \in \nd} W_{n,\bi\bj}+
\alpha_n^3 n^{-d} \sum_{\bj \in \nd} W_{n,\bi\bj}^4\right)\\
&\le n^d\alpha_n \left( \bar W_1 + \alpha_n^3 \bar W_4\right) =O(\alpha_n n^d),
\end{split}
\ee
where we also use \eqref{barW}.
Similarly,
\be\lbl{moments-2}
\begin{split}
\sum_{\bj \in \nd} \E\eta^2_{\bi\bj} & =
\sum_{\bj \in \nd}  \left\{\alpha_nW_{n,\bi\bj} \left( 1-\alpha_n W_{n,\bi\bj} \right)^2
+ \alpha_n^2 W^2_{n,\bi\bj} \left( 1-\alpha_n W_{n,\bi\bj} \right)\right\}\\
& \le n^d\alpha_n \left( N^{-1} \sum_{\bj \in \nd} W_{n,\bi\bj}+
\alpha_n n^{-d} \sum_{\bj \in \nd} W_{n,\bi\bj}^2\right)\\
&\le n^d\alpha_n \left( \bar W_1 + \alpha_n \bar W_2\right) =O(\alpha_n N).
\end{split}
\ee

By combining \eqref{moments}-\eqref{moments-2}, we obtain
\be\lbl{cont-moments}
\E \left\{(n^d\alpha_n)^{-2} 
\sum_{\b k,\b l\in\nd}^n 
c_{n,\bi\bk\bl} \eta_{n,\bi\bk}\eta_{n,\bi\bl} \right\}^2 =O\left((n^d\alpha_n)^{-2}\right).
\ee

By plugging \eqref{underAc} and \eqref{cont-moments} into \eqref{C-S},
we obtain
\be\lbl{term}
\E \left[ \1_{A_n^c} \exp\left\{
(n^d\alpha_n)^{-2} 
\sum_{\b k,\b l\in\nd}^n 
c_{n,\bi\bk\bl} \eta_{n,\bi\bk}\eta_{n,\bi\bl}
\right\} \right]
\le 1 +{C_1\over n^d\alpha_n} e^{C_2}.
\ee
Using this bound on the right--hand side of \eqref{ind-in-i},
we further obtain
\be\lbl{back-to-product}
\E \left[ \1_{A_n^c} \exp\left\{
(n^d\alpha_n)^{-2} 
\sum_{\bi,\b k,\b l\in\nd}^n 
c_{n,\bi\bk\bl} \eta_{n,\bi\bk}\eta_{n,\bi\bl}
\right\} \right]\le e^{C_3\alpha_n^{-1}}.
\ee

Using \eqref{back-to-product}, from \eqref{POmegan} we obtain
\be\lbl{POmegan-back}
\P( \Omega_n\cap A_n^c )  \le \exp\left\{ 
-n^d\delta_{n^d} +  C_3\alpha_n^{-1} \right\}\to 0, \quad n\to\infty.
\ee
Furthermore, using \eqref{def-delta} it is straightforward to check that
$$
\sum_{n=1}^\infty \P( \Omega_n\cap A_n^c ) <\infty.
$$
The statement of the lemma then follows from \eqref{OmegaN}-\eqref{clear-bound}
via Borel-Cantelli Lemma.
\end{proof}

\vskip 0.2cm
\noindent
{\bf Acknowledgements.}
This work was supported in part by the NSF grant DMS 1715161.

\bibliographystyle{amsplain}

\def\cprime{$'$} \def\cprime{$'$} \def\cprime{$'$}
\providecommand{\bysame}{\leavevmode\hbox to3em{\hrulefill}\thinspace}
\providecommand{\MR}{\relax\ifhmode\unskip\space\fi MR }
\providecommand{\MRhref}[2]{%
  \href{http://www.ams.org/mathscinet-getitem?mr=#1}{#2}
}
\providecommand{\href}[2]{#2}

\end{document}